
\documentclass[12pt]{article}

\usepackage{a4wide}
\usepackage{amssymb,amsmath,array,amsthm, amsfonts, amscd}
\usepackage{colortbl}
\usepackage{algpseudocode}
\usepackage{algorithm}
\usepackage{tikz}
\usepackage{graphicx}
\usepackage{comment}
\allowdisplaybreaks[1]

\input xy
\xyoption{matrix}\xyoption{arrow}
\def\edge{\ar@{-}}
\def\dedge{\ar@{.}}


\newtheorem{theorem}{Theorem}[section]
\newtheorem{corollary}[theorem]{Corollary}
\newtheorem{lemma}[theorem]{Lemma}
\newtheorem{proposition}[theorem]{Proposition}


\theoremstyle{definition}
\newtheorem{definition}[theorem]{Definition}
\newtheorem{example}[theorem]{Example}
\newtheorem{remark}[theorem]{Remark}



\def\cc{{\mathcal C}}

\def\ch{{\mathcal H}}

\def\cm{{\mathcal M}}

\def\co{{\mathcal O}}

\newcommand\mcr{{\mathcal R}}





\def\gkdim{{\rm GKdim}}
\def\st{{\rm st}}

\def\minusqdot{(-q)^\bullet}
\def\qdot{q^\bullet} 
\def\qhat{\widehat{q}}

\def\oq{\mathcal{O}_{q}}

\def\oqmnk{\oq(M_{mn}(\k))}
\def\oqnnk{\oq(M_{nn}(\k))}
\def\oqmnkg{\oq(M_{mn}(\k))_{\gamma}}


\def\k{K}



\title{Generalised quantum determinantal\\
 rings are 
maximal orders}
\author{T H Lenagan and L Rigal}
\date{}

\begin{document}
\maketitle
\begin{abstract} Generalised quantum determinantal rings are the analogue in quantum matrices of Schubert varieties. Maximal orders are the noncommutative version of integrally closed rings. 
In this paper, we show that generalised quantum determinantal rings 
are maximal orders. The cornerstone of the proof is a description of generalised quantum
determinantal rings, up to a localisation, as skew polynomial extensions.
\end{abstract}



\vskip .5cm
\noindent
{\em 2020 Mathematics subject classification:} 16T20, 16P40, 16S38, 17B37, 
20G42.

\vskip .5cm
\noindent


\section{Introduction} 

Let $\k$ be a field, let $m,n$ be positive integers and let $q$ be a nonzero element of $\k$.
The algebra of quantum matrices over $\k$, denoted by $\oqmnk$, is a quantum deformation of the 
coordinate ring of the variety of $m\times n$ matrices over $\k$. The set of quantum minors $\Pi$ 
in $\oqmnk$ 
carries a natural partial order with respect to which the standard monomials form a basis over $\k$: more precisely, $\oqmnk$ is a quantum graded algebra with a straightening law on the poset of quantum minors equipped with the standard partial order. (Precise definitions are given later.) 

Given a quantum minor $\gamma$ in $\oqmnk$, one can define a factor ring of $\oqmnk$, denoted 
by $\oqmnkg$, and known as 
the generalised quantum determinantal ring/factor  determined by $\gamma$. The generalised quantum 
determinantal factors of $\oqmnk$ are the analogues for $\oqmnk$ of  the quantum Schubert varieties in the 
grassmannian studied in \cite{lr-qsch}. The term generalised quantum determinantal ring is used because special instances of 
$\gamma$ determine the quantum determinantal factors where all quantum minors of a given size are set to be zero. Generalised quantum determinantal rings were shown to be integral domains  in \cite[Proposition 4.3]{lr-qsch}, but the question as to whether or not they are maximal orders was left open, see \cite[Remark 4.6]{lr-qsch}. 
In this work we show that they are indeed maximal orders. Maximal orders are the noncommutative 
analogues of normal varieties, or integrally closed rings. 

Our motivation, here, comes from noncommutative algebraic geometry. The algebras that we study are 
noncommutative analogues of coordinate rings of natural varieties arising from Lie theory and we want to 
study them as such. This was already the point of view in the works 
\cite{lr-max-det}, \cite{lr-qasl} and \cite{lr-qsch}, where properties of geometric nature of related 
algebras, expressible either in ring theoretic language (integrity, normality), or homologically 
(AS-Cohen-Macaulay, AS-Gorenstein properties) were studied.



\section{Basic definitions}

Let $\k$ be a field, and let $q$ a nonzero element of $\k$. 
The algebra of $m\times n$ quantum matrices over $\k$, denoted by $\oqmnk$, is 
the algebra generated over $\k$ by 
$mn$ indeterminates 
$x_{ij}$, with $1 \le i \le m$ and $1 \le j \le n$,  which commute with the elements of 
$\k$ and are subject to the relations:
\[
\begin{array}{ll}  
x_{ij}x_{il}=qx_{il}x_{ij},&\mbox{ for }1\le i \le m,\mbox{ and }1\le j<l\le
n\: ;\\ 
x_{ij}x_{kj}=qx_{kj}x_{ij}, & \mbox{ for }1\le i<k \le m, \mbox{ and }
1\le j \le n \: ; \\ 
x_{ij}x_{kl}=x_{kl}x_{ij}, & \mbox{ for } 1\le k<i \le m,
\mbox{ and } 1\le j<l \le n \: ; \\
x_{ij}x_{kl}-x_{kl}x_{ij}=(q-q^{-1})x_{il}x_{kj}, & \mbox{ for } 1\le i<k \le
m, \mbox{ and } 1\le j<l \le n.
\end{array}
\]
It is well-known that $\oqmnk$ is an iterated skew polynomial extension of $\k$ 
with the $x_{ij}$ added 
in lexicographic order. An immediate consequence is that $\oqmnk$ is a noetherian domain. 

When $m=n$, the {\em quantum determinant} $D_q$ is defined by;
\[
D_q:= \sum\,(-q)^{l(\sigma)}x_{1\sigma(1)}\dots x_{n\sigma(n)},
\]
where the sum is over all permutations $\sigma$ of $\{1,\dots,n\}$. 

The quantum determinant is a central element in the algebra 
$\oqnnk$. \\

Let $I$ and $J$ be $t$-element subsets of $\{1,\dots,m\}$ and $\{1,\dots,n\}$, respectively.
It is clear from the definitions that the subalgebra of $\oqmnk$ generated by those $x_{ij}$ with
$i\in I$ and $j\in J$ is isomorphic in the obvious way to ${\mathcal O}_q(M_{tt}(K))$. 
Then the {\em quantum minor} $[I\mid J]$ is defined to be the quantum determinant of this 
subalgebra. 
(Note that $x_{ij}=[i\mid j]$ and $[\emptyset\mid\emptyset]$ is taken to be $1$.)
It is immediate that $x_{ij}[I\mid J]=[I\mid J]x_{ij}$ for $i\in I$ and $j\in J$, but quantum minors do not commute with other variables. Nevertheless, several useful commutation relations have been developed, and 
we will use some of them in this article.

The set of all quantum minors  is denoted by $\Pi$.  The set
$\Pi$ is equipped with the partial order $\le_\st$ defined in
\cite[Section 3.5]{lr-qasl}. 
Namely, if $[I\mid J]$ and $[K\mid L]$ are quantum minors  with $I=\{i_1< \dots <i_u\}, J=\{j_1< \dots <j_u\},  
K=\{k_1< \dots <k_v\}$ and $L=\{l_1< \dots <l_v\}$ 
then 
\[ [I\mid J] \le_\st [K\mid L]
\Longleftrightarrow \left\{
\begin{array}{l}
u \ge v, \cr 
i_s \le k_s \quad\mbox{for}\quad 1 \le s \le v , \cr
j_s \le l_s \quad\mbox{for}\quad 1 \le s \le v .
\end{array}
\right.
\]~\\

The algebra of quantum matrices, equipped with the partial order $\le_\st$ defined on the set of quantum minors $\Pi$,  
is a quantum graded algebra with a straightening law (abbreviated QGASL), as defined in \cite{lr-qasl}, see \cite[Theorem 3.5.3]{lr-qasl}.

\begin{definition}\label{definition-qgasl}
Let $\gamma\in\Pi$ and set $\Pi_\gamma:=\{\alpha\in\Pi\mid\alpha\not\geq_{\st}\gamma\}$. 
Set $I_\gamma$ to be the ideal generated by $\Pi_\gamma$. 
The {\em 
generalised quantum determinantal ring} $\oqmnkg$ associated to $\gamma$ is the factor algebra 
$\oqmnk/I_\gamma$. 
(We let $p \, : \, \oqmnk \longrightarrow \oqmnkg$ be the canonical projection.)
\end{definition} 

The terminology we use arises in the following way. Let $\gamma = [1,\dots,t-1\mid 1,\dots,t-1]$. 
Then $\Pi_\gamma$ consists of the $s\times s$ quantum minors with $s\geq t$, and $\oqmnkg$ 
is the factor ring obtained by setting all of the $t\times t$ quantum minors to be zero: such algebras are known as quantum determinantal rings, see, for example, \cite{lr-max-det}

\begin{proposition}
The generalised quantum determinantal ring $\oqmnkg$ is a QGASL on the natural projection of
$\Pi\backslash\Pi_\gamma$ from $\oqmnk$ to  $\oqmnkg$.
\end{proposition}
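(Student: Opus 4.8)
The plan is to verify directly that $\oqmnkg$ satisfies the axioms of a QGASL, using the fact — recalled above from \cite[Theorem 3.5.3]{lr-qasl} — that $\oqmnk$ itself is a QGASL on $(\Pi,\le_\st)$, and that $I_\gamma$ is generated by the order ideal-complement $\Pi_\gamma=\{\alpha\in\Pi\mid\alpha\not\ge_\st\gamma\}$. Write $\overline\Pi:=p(\Pi\setminus\Pi_\gamma)$ for the image of the remaining minors, equipped with the order induced by $\le_\st$. The first point to check is that $p$ is injective on $\Pi\setminus\Pi_\gamma$, so that $\overline\Pi$ is genuinely a copy of the subposet $\Pi\setminus\Pi_\gamma$; this follows because the standard monomials on $\Pi$ form a $\k$-basis of $\oqmnk$, and $I_\gamma$ is spanned by those standard monomials involving at least one factor from $\Pi_\gamma$ (this is a standard consequence of the straightening law: the ideal generated by a ``co-ideal'' of the poset is spanned by the standard monomials meeting it), so the standard monomials supported entirely on $\Pi\setminus\Pi_\gamma$ descend to a $\k$-basis of $\oqmnkg$.

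With that in hand, the QGASL axioms for $\oqmnkg$ are inherited termwise from those of $\oqmnk$. Concretely: (i) $\overline\Pi$ generates $\oqmnkg$ as a $\k$-algebra and $1\in\overline\Pi$; (ii) the grading on $\oqmnk$ (with the $x_{ij}$ in degree $1$, or the relevant QGASL grading) passes to the quotient since $I_\gamma$ is a homogeneous ideal — each minor in $\Pi_\gamma$ is homogeneous — making $\oqmnkg$ a connected graded $\k$-algebra with $\overline\Pi$ consisting of homogeneous elements; (iii) the commutation relations $\alpha\beta = q^{?}\beta\alpha + (\text{lower standard terms})$ among minors, and the straightening relations expressing a non-standard product $\alpha\beta$ (with $\alpha\not\le_\st\beta$ and $\alpha\not\ge_\st\beta$) as a $\k$-linear combination of standard monomials $\ge$ both $\alpha$ and $\beta$ in the order, map under $p$ to the corresponding relations in $\oqmnkg$; any term on the right-hand side that involves a minor from $\Pi_\gamma$ is killed, and one checks that the surviving terms still satisfy the required order constraints because $\Pi\setminus\Pi_\gamma$ is an up-set (order filter): if a standard monomial has all its factors $\ge_\st$ some element not in $\Pi_\gamma$, those factors are themselves not in $\Pi_\gamma$. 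Finally, the standard-monomial basis property is exactly the content of the previous paragraph.

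The main obstacle — and the only place where real care is needed — is the bookkeeping in axiom (iii): one must confirm that after deleting the terms lying in $I_\gamma$ from a straightening identity of $\oqmnk$, what remains is still a valid straightening identity \emph{for the induced poset on $\overline\Pi$}, i.e. that no ``illegal'' term (a standard monomial not dominated appropriately) can appear, and that the defining relations of $\oqmnkg$ are precisely generated by these reduced identities together with the generators of $I_\gamma$. The key structural fact making this work is that $\Pi_\gamma$ is a co-ideal of $(\Pi,\le_\st)$ — by its very definition $\alpha\in\Pi_\gamma$ and $\alpha\le_\st\alpha'$ need not force $\alpha'\in\Pi_\gamma$, so one actually wants the complementary statement that $\Pi\setminus\Pi_\gamma=\{\alpha\mid\alpha\ge_\st\gamma\}$ is an up-set, which is immediate from transitivity of $\le_\st$. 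Granting this, the verification is routine and the proposition follows; essentially it is the general principle that a QGASL modulo the ideal generated by a co-ideal of the underlying poset is again a QGASL on the complementary up-set, applied to the specific poset of quantum minors. I would state and use that general principle (it may already be recorded in \cite{lr-qasl} or \cite{lr-qsch}), which reduces the proof to checking that $\Pi_\gamma$ is a co-ideal — which is true by construction.
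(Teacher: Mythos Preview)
Your argument is correct and is essentially the same as the paper's: the paper simply invokes \cite[Theorem 3.5.3 and Corollary 1.2.6]{lr-qasl}, the latter being precisely the general principle you anticipate at the end --- that a QGASL modulo the ideal generated by a poset-ideal is again a QGASL on the complement --- so your longer axiom-by-axiom verification is a faithful unpacking of that citation. One terminological slip: $\Pi_\gamma$ is a down-set (order ideal), not a co-ideal; you catch this yourself when you pass to the complementary up-set $\{\alpha\mid\alpha\ge_\st\gamma\}$, which is the statement actually needed.
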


\begin{proof} 
This follows immediately from \cite[Theorem 3.5.3 and Corollary 1.2.6]{lr-qasl}.
\end{proof}

Recall that an element $u$ of a ring $R$ is a {\em normal} element if $uR=Ru$ and is {\em regular} if 
it is a nonzerodivisor. 

\begin{corollary} \label{corollary-minimal-element}
The image $\overline{\gamma}$ of $\gamma$ in $\oqmnkg$ is the unique minimal element of 
$p(\Pi\backslash\Pi_\gamma)$. 
Further, for each $\tau\geq_\st\gamma$, there exists $c_\tau\in\k$, nonzero, such that $\overline{\gamma}\,\overline{\tau}=c_\tau\overline{\tau}\,\overline{\gamma}$, where $\overline{\tau}=p(\tau)$. 
Consequently, $\overline{\gamma}$ 
is a regular normal element of the generalised quantum determinantal ring $\oqmnkg$.
\end{corollary}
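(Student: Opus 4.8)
The plan is to exploit the QGASL structure of $\oqmnkg$ established in the previous proposition, together with the straightening law on $p(\Pi\setminus\Pi_\gamma)$, and to transfer the known commutation relations for quantum minors in $\oqmnk$ to the factor.

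First I would prove that $\overline{\gamma}$ is the unique minimal element of $p(\Pi\setminus\Pi_\gamma)$. By definition $\Pi_\gamma=\{\alpha\in\Pi\mid\alpha\not\geq_\st\gamma\}$, so $\Pi\setminus\Pi_\gamma=\{\alpha\in\Pi\mid\alpha\geq_\st\gamma\}$; in particular $\gamma$ itself belongs to $\Pi\setminus\Pi_\gamma$, and every element of this set is $\geq_\st\gamma$. It remains to check that distinct elements of $\Pi\setminus\Pi_\gamma$ have distinct images under $p$, which follows from the fact that $\oqmnkg$ is a QGASL on $p(\Pi\setminus\Pi_\gamma)$ (the standard monomials, in particular the degree-one monomials, are linearly independent, so $p$ is injective on $\Pi\setminus\Pi_\gamma$). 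Hence $\overline{\gamma}$ is the unique minimal element.

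Next I would establish the commutation relations. For $\tau\geq_\st\gamma$, both $\gamma$ and $\tau$ are quantum minors in $\oqmnk$, and there is a well-known family of commutation relations among quantum minors (of the form $\gamma\tau=c\,\tau\gamma+(\text{terms that are products of other quantum minors})$, valid whenever the index sets are suitably comparable, e.g. when one minor is $\leq_\st$ the other). Applying $p$ to such a relation, every quantum minor $\alpha$ appearing in an error term on the right-hand side satisfies $\alpha\not\geq_\st\gamma$ — because it is strictly "below" $\tau$ in a way incompatible with dominating $\gamma$ — hence lies in $\Pi_\gamma$ and maps to $0$. What survives is exactly $\overline{\gamma}\,\overline{\tau}=c_\tau\,\overline{\tau}\,\overline{\gamma}$ with $c_\tau=c$ the nonzero scalar (a power of $q$) from the original relation. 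I expect the main obstacle to be this step: one must invoke precisely the right commutation identity from the literature (for instance from \cite{lr-qasl} or the references therein) and verify carefully that \emph{every} minor occurring with nonzero coefficient in the correction terms is genuinely $\not\geq_\st\gamma$, which is a combinatorial check on the index sets relative to those of $\gamma$ and $\tau$.

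Finally I would deduce that $\overline{\gamma}$ is regular and normal. Normality: since $\oqmnkg$ is generated as an algebra by $p(\Pi\setminus\Pi_\gamma)=\{\overline{\tau}\mid\tau\geq_\st\gamma\}$ (the degree-one part of the QGASL generates), and $\overline{\gamma}$ commutes with each such generator up to the nonzero scalar $c_\tau$, it follows that $\overline{\gamma}\,\oqmnkg=\oqmnkg\,\overline{\gamma}$, so $\overline{\gamma}$ is normal. Regularity: $\oqmnkg$ is an integral domain by \cite[Proposition 4.3]{lr-qsch} (equivalently, this follows from the QGASL structure), and $\overline{\gamma}\neq0$ since it is a nonzero degree-one standard monomial; a nonzero element of a domain is a nonzerodivisor, so $\overline{\gamma}$ is regular. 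This completes the proof.
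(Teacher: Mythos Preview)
Your outline is sound, and the first and third parts are fine (using the domain property from \cite{lr-qsch} for regularity is legitimate and not circular here). The middle step, however, is both a detour and contains a gap. You propose to locate in $\oqmnk$ a specific commutation identity between the quantum minors $\gamma$ and $\tau$ and then check combinatorially that every minor in the correction terms fails to dominate $\gamma$; your stated reason is that such a minor is ``strictly below $\tau$ in a way incompatible with dominating $\gamma$'', but being $<_\st\tau$ does not by itself preclude being $\ge_\st\gamma$, so as written this justification does not go through.

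The resolution is that the needed fact is already built into the QGASL axioms and requires no return to $\oqmnk$. One of the defining conditions of a quantum graded ASL in \cite{lr-qasl} is that for any two poset elements $\alpha,\beta$ there is a nonzero scalar $c_{\alpha\beta}$ with $\alpha\beta-c_{\alpha\beta}\beta\alpha$ a linear combination of standard monomials $\lambda\mu$ satisfying $\lambda<_\st\alpha$ (and $\lambda\le_\st\mu$). Since $\oqmnkg$ is a QGASL on $p(\Pi\setminus\Pi_\gamma)$ with minimum $\overline\gamma$, taking $\alpha=\overline\gamma$ leaves no room for any such $\lambda$, and $\overline\gamma\,\overline\tau=c_\tau\overline\tau\,\overline\gamma$ follows immediately. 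This is exactly what \cite[Lemma~1.2.1]{lr-qasl} records, and the paper's proof is the one-line citation of that lemma. The same reference also yields regularity directly from the standard-monomial basis: left multiplication by $\overline\gamma$ sends each standard monomial $\overline{\tau_1}\cdots\overline{\tau_s}$ (with $\gamma\le_\st\tau_1\le_\st\cdots$) to a nonzero scalar multiple of the standard monomial $\overline\gamma\,\overline{\tau_1}\cdots\overline{\tau_s}$, and distinct standard monomials go to distinct ones, so no appeal to integrality is needed.
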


\begin{proof} See the proof of \cite[Lemma 1.2.1]{lr-qasl}.
\end{proof} 



\section{Relations for a subalgebra of quantum matrices}\label{section-relations} 


Let $\gamma = [A|B]=[a_1,\dots,a_t\mid b_1,\dots, b_t]$ be a quantum minor in $\oqmnk$, and let 
$c_1 < c_2 < \dots < c_{n-t}$ be the column indices of $\oqmnk$ that do not
occur in $\gamma$ and $r_1 < r_2 < \dots < r_{m-t}$ be the row indices of $\oqmnk$ that do not
occur in $\gamma$. We will use this notation throughout the paper.

Denote by $S$ the $t\times t$ quantum matrix subalgebra of $\oqmnk$ generated by the $x_{a_ib_j}$.
Our strategy to show that the 
generalised determinantal algebra determined by $\gamma$ is a maximal order will be to show that 
it is related via localisation to an algebra $T$ which is an iterated Ore extension. The algebra $T$ is a 
subalgebra of quantum matrices 
generated by $S$ and a family of quantum minors
(explicit generators are given below). In order to show that $T$ 
is an iterated Ore extension, we need to do two things. First, we need to develop suitable
commutation relations between the generators; this is done in this section. Secondly, we need to show that 
the generators are independent enough to give a presentation as an iterated Ore extension.

Let $\cm$ be the set of $t\times t$  quantum
minors that are $\geq_\st \gamma$, and which differ from
$\gamma$ in precisely one entry.
Let $T$ be the subalgebra of $\oqmnk$ generated over $S$ 
by the quantum minors in $\cm$.  
To study $T$, we need a notation for the quantum minors in $\cm$.

First, note that each such minor either has the same row set or column set as
$\gamma$ and differs from $\gamma$ in the column set or row set, respectively,
by exactly one element. Let $\mcr$ be the set of such quantum 
minors with the same row set
as $\gamma$ and $\cc$ be the set of such quantum 
minors with the same column set as
$\gamma$.

Note that a quantum minor $[A\mid B\sqcup\{c_{(n-t+1)-i}\}\backslash\{b_j\}]$ is in $\cm$ precisely 
when $b_j<c_{(n-t+1)-i}$. Similarly, the quantum minor $[A\sqcup\{r_{(m-t+1)-i}\}\backslash\{a_j\}\mid B]$ 
is in $\cc$ 
precisely when $a_j<r_{(m-t+1)-i}$.

For each $i,j$ such that $b_j<c_{(n-t+1)-i}$, set 
\[
m_{ij}:=[A\mid B\sqcup\{c_{(n-t+1)-i}\}\backslash\{b_j\}],
\]
and, similarly, set 
\[
n_{ij}:=[A\sqcup\{r_{(m-t+1)-i}\}\backslash\{a_j\}\mid B]
\]
 whenever 
 $a_j<r_{(m-t+1)-i}$.
 
 Then, $\mcr=\{m_{ij}\mid b_j<c_{(n-t+1)-i}\}$ and $\cc=\{n_{ij}\mid a_j<r_{(m-t+1)-i}\}$. 

We now order $\cm=\mcr\sqcup\cc$ in the following way. The members  of $\mcr$ come before 
the members of $\cc$, while $m_{ij}\leq m_{kl}$ if and only if $(i,j)\leq (k,l)$ in lexicographic order and, 
similarly, $n_{ij}\leq n_{kl}$ if and only if $(i,j)\leq (k,l)$ in lexicographic order.

The plan is to build up $T$ from $S$ by introducing the members of $\cm$ in this order. \\

At this point, let's look at a specific example.

\begin{example}
Consider the quantum minor $\gamma=[13\mid 12]$ in $\oq(M_{3,3}(K))$. Then 
$S=\k[x_{11},x_{12},x_{31},x_{32}]$, a quantum matrix subalgebra of $\oq(M_{3,3}(K))$, while $\mcr=\{m_{11},m_{12}\}=\{[13\mid 23],[13\mid 13]\}$, $\cc=
\{n_{11}\}=\{[23\mid12]\}$ and 
$\cm= \{m_{11}<m_{12}<n_{11}\}=\{[13\mid 23]<[13\mid13]<[23\mid 12]\}$.
Then, 
\[
T=S[m_{11},m_{12},n_{11}].
\]
What we are aiming to do amounts to showing, in this example, that $T$ is (isomorphic to) a three step iterated Ore extension of $S$, with the ``variables'' $m_{11},m_{12},n_{11}$ added in this order. It then 
follows that $T$ is a seven step iterated Ore extension of $\k$, and that $\gkdim(T)=7$.
\end{example} 

For each $m_{kl}$ that is defined let $R(m_{kl})$ be the subalgebra of $T$ generated by $S$ and the 
$m_{ij}$ that are less than or equal to $m_{kl}$ in the order defined above, and for each 
$n_{kl}$ that is defined let $R(n_{kl})$ be the subalgebra of $T$ generated by $S$, all of the $m_{ij}$ and the 
$n_{ij}$ that are less than or equal to $n_{kl}$ in the order defined above. Let $m_{kl}^-$ be the $m_{ij}$
that immediately precedes $m_{kl}$ in the above order and let 
$n_{kl}^-$ be the $n_{ij}$
that immediately precedes $n_{kl}$ in the above order. Then, $R(m_{kl})$ is generated over 
$R(m_{kl}^-)$ by $m_{kl}$ and $R(n_{kl})$ is generated over 
$R(n_{kl}^-)$ by $n_{kl}$. (If $m_{kl}^-$ does not exist set $R(m_{kl}^-):=S$, and,  similarly, if $n_{kl}^-$ does not exist then $R(n_{kl}^-)$ is generated over 
$S$ by all of the $m_{ij}$.)

We need to know suitable commutation relations between the members of $\cm$ and between 
members of $\cm$ 
and the $x_{ij}$ in the quantum matrix subalgebra $S$. 

First, we check how the $m_{ij},n_{ij}$ commute with the generators of $S$. 

\begin{lemma}
Let $x_{a_k b_l}$ be a generator for $S$. Then 

(i)
$x_{a_k b_l}$ commutes with $m_{ij}$ when  $l\neq j$, while 
\[
x_{a_k b_j}m_{ij} -qm_{ij}x_{a_k b_j} 
=
\qhat \sum_{s<j}\, \minusqdot m_{is}x_{a_k b_s}\,,
\]
where $\qhat = q-q^{-1}$ and $\minusqdot$ is an unspecified positive integer power of $-q$.

(ii) $x_{a_k b_l}$ commutes with $n_{ij}$ when  $k\neq j$, while 
\[
x_{a_j b_l}n_{ij} -qn_{ij}x_{a_j b_l}
=
\qhat \sum_{s<j}\, \minusqdot n_{is}x_{a_s b_l}\,,
\]
where $\qhat = q-q^{-1}$ and $\minusqdot$ is an unspecified positive integer power of $-q$.
\end{lemma}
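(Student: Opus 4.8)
The plan is to derive these commutation relations from known quantum minor identities in $\oqmnk$, reducing each statement to a computation inside a $(t+1)\times(t+1)$ quantum matrix subalgebra. Recall that $m_{ij}=[A\mid B\sqcup\{c_{(n-t+1)-i}\}\backslash\{b_j\}]$ is a $t\times t$ quantum minor whose row set is $A$, the row set of $\gamma$; in particular $a_k\in A$ for every generator $x_{a_kb_l}$ of $S$. The relations governing how a single variable $x_{a_kb_l}$ commutes with a quantum minor $[A\mid B']$ whose row set contains $a_k$ are standard (see the commutation relations catalogued in \cite{lr-qasl}, or the identities of the type in \cite{lr-max-det}): if the column index $b_l$ is \emph{not} in the column set $B'$, then $x_{a_kb_l}$ and $[A\mid B']$ commute because all the relevant generators $x_{a_kb_t}$ with $b_t\in B'$ either commute with $x_{a_kb_l}$ or $q$-commute in a way that cancels in the alternating sum defining the quantum minor; if $b_l\in B'$ then one gets a relation of the stated shape. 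For part (i), when $l\neq j$ the column index $b_l$ does lie in the column set of $m_{ij}$ only if $b_l\neq b_j$ and $b_l$ survives the deletion, i.e. $l\neq j$ means $b_l$ is still present — so actually one has to be slightly careful: the clean commuting case $l\neq j$ must be the one where $b_l$ is \emph{absent} from $B\sqcup\{c_{(n-t+1)-i}\}\backslash\{b_j\}$, which happens precisely because... hmm, let me restate: the column set of $m_{ij}$ is $B$ with $b_j$ removed and $c_{(n-t+1)-i}$ inserted, so for $l\neq j$ the index $b_l\in B$ is still in the column set. The correct reading is that when $l\neq j$ the variable $x_{a_kb_l}$ appears as one of the entries of the minor $m_{ij}$, and $x_{a_kb_l}$ commutes with any quantum minor $[I\mid J]$ with $a_k\in I$, $b_l\in J$ — this is the identity $x_{ij}[I\mid J]=[I\mid J]x_{ij}$ for $i\in I$, $j\in J$ noted in the preliminaries. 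So I would invoke that directly.

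For the non-commuting case $l=j$ in part (i): here $b_j$ has been \emph{removed} from the column set of $m_{ij}$, so $x_{a_kb_j}$ is a variable in column $b_j$ meeting a minor whose column set is $B\sqcup\{c_{(n-t+1)-i}\}\backslash\{b_j\}$ — a minor not involving column $b_j$ but involving columns $b_1,\dots,\widehat{b_j},\dots,b_t$ and $c_{(n-t+1)-i}$. Since $c_{(n-t+1)-i}>b_j$ by the defining inequality for $m_{ij}$, the column $b_j$ lies strictly below the "new" column $c_{(n-t+1)-i}$ but possibly among the $b_s$. I would apply the standard commutation relation between a generator and a quantum minor whose column set it is not part of; such a relation expresses $x_{a_kb_j}[A\mid J] - q^{\pm1}[A\mid J]x_{a_kb_j}$ as $\qhat$ times a signed sum of terms $[A\mid J\sqcup\{b_j\}\backslash\{b_s\}]\,x_{a_kb_s}$ over columns $b_s$ of $J$ lying on the appropriate side of $b_j$. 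Reinserting $b_j$ and deleting $b_s$ for $s<j$ yields exactly $m_{is}=[A\mid B\sqcup\{c_{(n-t+1)-i}\}\backslash\{b_s\}]$, with some power of $-q$ as coefficient; the terms with $s>j$ must be shown to vanish or not appear, which follows from the geometry of which side of $b_j$ each column sits (the relation only produces columns below $b_j$, or a sign pattern that forces the others to cancel). This is the computational heart of (i).

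Part (ii) is entirely dual: $n_{ij}=[A\sqcup\{r_{(m-t+1)-i}\}\backslash\{a_j\}\mid B]$ has column set $B$ and row set obtained from $A$ by deleting $a_j$ and inserting $r_{(m-t+1)-i}$, and the generators $x_{a_kb_l}$ of $S$ all have $b_l\in B$. For $k\neq j$ the row $a_k$ is still in the row set of $n_{ij}$, so again $x_{a_kb_l}$ commutes with $n_{ij}$ by the $i\in I$, $j\in J$ rule. For $k=j$ one applies the row-version of the same generator--minor commutation identity, using $r_{(m-t+1)-i}>a_j$, to get $x_{a_jb_l}n_{ij}-qn_{ij}x_{a_jb_l} = \qhat\sum_{s<j}\minusqdot\,n_{is}x_{a_sb_l}$, with the rows below $a_j$ contributing and the rest cancelling. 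Formally this can be deduced from (i) by the transpose antiautomorphism of quantum matrices (which swaps rows and columns and sends $q\mapsto q$), provided one checks it behaves well on the minors involved; alternatively one just repeats the argument with rows and columns interchanged.

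The main obstacle I anticipate is pinning down the exact shape of the right-hand sides: both that only the columns (resp. rows) with $s<j$ contribute — i.e. that the terms with $s>j$ genuinely cancel or never arise — and that the surviving terms factor as $m_{is}x_{a_kb_s}$ (resp. $n_{is}x_{a_sb_l}$) with the remaining coefficient being a power of $-q$ (hence positive, as claimed) rather than a power of $q$ or a more complicated Laurent monomial. Both points come down to carefully tracking the standard quantum-minor commutation formulas (as in \cite[Section 3]{lr-qasl} or the appendix of \cite{lr-max-det}) and the position of the inserted column $c_{(n-t+1)-i}$ relative to $b_j$; the inequality $b_j<c_{(n-t+1)-i}$ (resp. $a_j<r_{(m-t+1)-i}$) defining membership in $\cm$ is exactly what makes the sign bookkeeping come out uniformly, so I would make sure to use it at the decisive step.
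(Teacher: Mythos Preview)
Your approach is essentially the same as the paper's: for the commuting case you correctly invoke the identity $x_{ij}[I\mid J]=[I\mid J]x_{ij}$ when $i\in I$ and $j\in J$ (noted in the preliminaries), and for the non-commuting case you apply a standard variable--minor commutation formula with $a_k$ in the row set and $b_j$ absent from the column set. The paper cites this formula from \cite[Lemma~4.5.1(2), first equation]{pw} (with the caveat that $q$ and $q^{-1}$ must be interchanged), which directly gives the sum only over columns of the minor that are strictly less than $b_j$; since the column set of $m_{ij}$ is $\{b_1,\dots,\widehat{b_j},\dots,b_t,c_{(n-t+1)-i}\}$ and both $b_{j+1},\dots,b_t$ and $c_{(n-t+1)-i}$ exceed $b_j$, only $b_1,\dots,b_{j-1}$ contribute---so your worry about terms with $s>j$ dissolves once the precise formula is quoted, and no separate cancellation argument is needed.
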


\begin{proof} We prove (i), the proof of (ii) is similar. 
Recall that $m_{ij}=[A\mid B\sqcup\{c_{(n-t+1)-i}\}\backslash\{b_j\}]$ is the 
quantum determinant of a quantum matrix subalgebra that contains the element 
$x_{a_k b_l}$ whenever $l\neq j$, and so commutes with such elements. 

When $l=j$, we use \cite[Lemma 4.5.1(2), first equation]{pw}
with $r=a_k,\; 
s=b_j,\; A(i,s)=[A\mid B\sqcup\{c_{(n-t+1)-i}\}\backslash\{b_j\}]$. (Note that we must interchange $q$ and $q^{-1}$ when using results from \cite{pw}.)
We see that 
\begin{eqnarray*}
\lefteqn{x_{a_k b_j}m_{ij} -qm_{ij}x_{a_k b_j} }\\
&=&
x_{a_k b_j} [A\mid B\sqcup\{c_{(n-t+1)-i}\}\backslash\{b_j\}]        
-q[A\mid B\sqcup\{c_{(n-t+1)-i}\}\backslash\{b_j\}]x_{a_k b_j}\\
&=& 
\qhat\sum_{s<j}\,\minusqdot 
[A\mid (B\sqcup\{c_{(n-t+1)-i}\}\backslash\{b_j\}) \sqcup \{b_j\}\backslash \{b_s\}]x_{a_k b_s}\\
&=&
\qhat\sum_{s<j}\,\minusqdot[A\mid  B\sqcup\{c_{(n-t+1)-i}\}
\backslash\{b_s\}]x_{a_k b_s}\\
&=&
\qhat\sum_{s<j}\,\minusqdot m_{is}x_{a_k b_s}\,.
\end{eqnarray*}
Note that these $m_{is}$ exist, as $b_s<b_j<c_{(n-t+1)-i}$. 
\end{proof} 

It is important to note that this result shows that $x_{a_k b_j}m_{ij} -qm_{ij}x_{a_k b_j}\in R(m_{ij}^-)$, and similarly for $n_{ij}$.\\

Next,  we need to know commutation relations for $\cm$.

\begin{proposition}
(i) The $m_{ij}$ obey the rules for quantum matrix variables, with parameter $q^{-1}$, as do the $n_{ij}$.\\
(ii) Given $m_{ij}\in\mcr$ and $n_{kl}\in\cc$,
$
m_{ij}n_{kl}=n_{kl}m_{ij}.
$
\end{proposition}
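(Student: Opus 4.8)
The plan is to derive everything from known commutation relations for quantum minors in $\oqmnk$, applied to the specific minors $m_{ij}$ and $n_{ij}$, which are all of size $t$ and all obtained from $\gamma=[A\mid B]$ by a single row- or column-swap. For part (i), consider two minors $m_{ij}=[A\mid B_i^{\,j}]$ and $m_{kl}=[A\mid B_k^{\,l}]$ in $\mcr$, where $B_i^{\,j}=B\sqcup\{c_{(n-t+1)-i}\}\backslash\{b_j\}$. Since they share the row set $A$, the relevant relations are the commutation relations between two minors on the same rows but different columns (the ``quantum Plücker-type'' or ``same-row minor'' relations; these appear in \cite{pw} and are of the same form as the defining relations of $\oqmnk$ read off on the column indices). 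One checks, by comparing the sorted column sequences of $B_i^{\,j}$ and $B_k^{\,l}$, that the four cases $(i<k,j<l)$, $(i<k,j>l)$, $(i=k,j\neq l)$, $(i\neq k,j=l)$ reproduce exactly the four quantum-matrix relations, with $q$ replaced by $q^{-1}$ (as flagged, the PW conventions invert $q$), and that any ``error terms'' arising from the general minor commutation identities are genuinely absent here because the column sets differ in only a single, controlled way. The argument for the $n_{ij}$ is identical, using minors on the same columns $B$ and varying rows.

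For part (ii), take $m_{ij}=[A\mid B_i^{\,j}]\in\mcr$ and $n_{kl}=[A_k^{\,l}\mid B]\in\cc$, where $A_k^{\,l}=A\sqcup\{r_{(m-t+1)-k}\}\backslash\{a_l\}$. These two minors share neither their full row set nor their full column set, but they do satisfy strong containment-type constraints: the column set of $m_{ij}$ contains $B\backslash\{b_j\}$ and one extra column $c_{(n-t+1)-i}$ lying outside $\gamma$, while the row set of $n_{kl}$ contains $A\backslash\{a_l\}$ and one extra row $r_{(m-t+1)-k}$ lying outside $\gamma$. The natural tool is the commutation relation for two quantum minors whose row sets (resp.\ column sets) are ``interleaved'' appropriately — I expect to invoke the relevant case of \cite[Lemma 4.5.1]{pw} or a standard consequence of it, which says that when the row index sets and column index sets of two $t\times t$ minors are suitably comparable, the two minors commute outright. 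One must verify that the pair $(m_{ij},n_{kl})$ falls into this commuting case: the key point is that $c_{(n-t+1)-i}\notin B$ while all of $B$ is among the columns of $n_{kl}$, and dually $r_{(m-t+1)-k}\notin A$ while all of $A$ is among the rows of $m_{ij}$, so the ``crossing'' that would generate a correction term cannot occur.

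The main obstacle is bookkeeping: translating the abstract minor-commutation identities of \cite{pw} into statements about the reindexed families $\{m_{ij}\}$ and $\{n_{ij}\}$, keeping track of the $q\leftrightarrow q^{-1}$ convention switch, and — crucially — checking in each case that the correction terms that appear in the general identities vanish or collapse to the clean form claimed. For (i) this means being careful about which of the four quantum-matrix relations each ordered pair $(i,j)$ versus $(k,l)$ produces, and confirming there is no residual straightening term (the previous lemma already illustrates the flavour of such computations and of which auxiliary $m_{is}$ or $n_{is}$ remain in range). For (ii) the delicate point is simply identifying the correct case of the PW minor relations and confirming the index hypotheses for ``commutation without correction'' are met; once that case is pinned down, the conclusion is immediate. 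I would organise the write-up by first quoting the precise form of the same-rows / same-columns minor relations, then doing the case analysis for (i), and finally quoting the mixed-minor relation and checking its hypotheses for (ii).
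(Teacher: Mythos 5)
Your overall strategy --- a case analysis against known minor-commutation identities, keeping track of the $q\leftrightarrow q^{-1}$ convention --- has the right shape, but the step that carries all the content is missing. For part (i), the claim that two members of $\mcr$ ``reproduce exactly the four quantum-matrix relations with parameter $q^{-1}$'' \emph{is} the proposition; the source you lean on, \cite[Lemma 4.5.1]{pw}, only gives commutation relations between a single generator $x_{rs}$ and a quantum minor, not between two $t\times t$ minors, and there is no ready-made ``same-row minor--minor'' relation there in the form you need. So, as written, ``one checks, by comparing the sorted column sequences, that the four cases reproduce the four relations'' is an assertion of the result rather than a proof of it. The device that makes the check feasible --- and the one the paper uses --- is the quantum Muir law of extensible minors (\cite[Proposition 1.3]{lr-qsch}): since $m_{ij}$ and $m_{kl}$ share the row set $A$ and the columns $B\backslash\{b_j,b_l\}$, one strips the common indices, verifies the identity for $2\times 2$ minors in $\oq(M_{2,4}(K))$ by direct computation (for the hardest case $i<k$, $j<l$ this is $[12|24][12|13]-[12|13][12|24]=(q^{-1}-q)[12|14][12|23]$, so the correction term does \emph{not} vanish but is precisely the quantum-matrix relation, and one must also check that $m_{il}$ and $m_{kj}$ are actually defined), and then re-introduces the common rows and columns by Muir's law. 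Without Muir's law, or an equivalent reduction to a small quantum matrix algebra, your case analysis has no identity to run on.

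For part (ii) your structural observation is exactly the right one, and it is what makes the proof work: $m_{ij}$ and $n_{kl}$ both contain the rows $A\backslash\{a_l\}$ and the columns $B\backslash\{b_j\}$, so stripping these by Muir's law reduces the claim to the commutation of $x_{a_l\,c_{(n-t+1)-i}}$ with $x_{r_{(m-t+1)-k}\,b_j}$, which holds because $a_l<r_{(m-t+1)-k}$ and $b_j<c_{(n-t+1)-i}$ put the two variables in antidiagonal position in a $2\times 2$ quantum matrix. But again, the lemma you cite contains no ``two comparable minors commute outright'' statement, and you leave unidentified the result whose hypotheses you propose to verify; you should either quote a precise minor--minor commutation theorem or run the same Muir-law reduction. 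As it stands, the hypothesis-checking you defer is the whole proof.
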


\begin{proof}
There are several relations to check. We present the proof of the following claim. (The proofs for all other cases are similar, but easier.) 
{\em Suppose that $i<k$ and $j<l$ and that $m_{ij}$ and $m_{kl}$ are defined. 
Then $m_{il}$ and $m_{kj}$ are defined and 
\[
m_{ij}m_{kl}-m_{kl}m_{ij} = (q^{-1}-q)m_{il}m_{kj}\,.
\]
}

{\em Proof of claim}. 
We know that $b_j<b_l$, and $b_l<c_{(n-t+1)-k}$ as $m_{kl}$ is defined. Thus, 
$b_j<c_{(n-t+1)-k} $ and so $m_{kj}=[A\mid B\sqcup \{c_{(n-t+1)-k}\}\backslash \{b_j\}]$ is defined. 
Also, $b_l<c_{(n-t+1)-k}$, as 
$m_{kl}$ is defined, 
and $ c_{(n-t+1)-k}<c_{(n-t+1)-i}$ as $i<k$. As a consequence,  $b_l<c_{(n-t+1)-i}$
and so
$m_{il}=[A\mid B\sqcup \{c_{(n-t+1)-i}\}\backslash \{b_l\}]$ is defined. 

Set $A'=A\backslash\{a_1,a_2\}$ and 
$B':= B\backslash\{b_j,b_l\}$. 
Then $m_{ij}=[A'\sqcup\{a_1,a_2\}\mid B'\sqcup\{c_{(n-t+1)-i},b_l\}]$
and 
$m_{kl}=[A'\sqcup\{a_1,a_2\}\mid B'\sqcup\{c_{(n-t+1)-k},b_j\}]$. 

We need a commutation rule for the quantum minors $[a_1a_2\mid b_l c_{(n-t+1)-i}]$ 
and $[a_1a_2\mid b_j c_{(n-t+1)-k}]$, where $b_j<b_l<c_{(n-t+1)-k}<c_{(n-t+1)-i}$.
It is easy to verify that, in $\oq(M_{2,4}(K))$, we have 
$[12|24][12|13]-[12|13][12|24] = (q^{-1} -q)[12|14][12|23]$. 
It follows that, in $\oqmnk$,  
\[
\begin{array}{l}
[a_1 a_2|b_l c_{(n-t+1)-i}][a_1 a_2|b_j c_{(n-t+1)-k}] 
- [a_1 a_2|b_j c_{(n-t+1)-k}][a_1 a_2|b_l c_{(n-t+1)-i}] \cr
\qquad\qquad\qquad\qquad\qquad\qquad\qquad\qquad
=(q^{-1} -q)[a_1 a_2|b_j c_{(n-t+1)-i}][a_1 a_2|b_l c_{(n-t+1)-k}].
\end{array}
\]
Using the quantum Muir's Law of extensible minors, see 
\cite[Proposition 1.3]{lr-qsch} for example, to re-introduce $A'$ and $B'$ we obtain
\[
m_{ij}m_{kl}-m_{kl}m_{ij} = (q^{-1}-q)m_{il}m_{kj}\,,
\]
as required. 
\end{proof}



\section{Torus actions induced from $\oqmnk$}\label{section-torus-actions}

The algebra $T$ is a subalgebra of $\oqmnk$. Recall that there is an action of the torus 
$\ch=(\k^*)^{m+n}$ on $\oqmnk$ defined on the generators of $\oqmnk$ in the following way: if 
$h=(\alpha_1,\dots,\alpha_m;\beta_1\dots,\beta_n)$ then $h\cdot x_{ij}:= \alpha_i\beta_jx_{ij}$. The generators $x_{ij},m_{ij},n_{ij}$ are all eigenvectors for the action of $\ch$; and so 
it is easy to 
check that $\ch$ restricts to automorphisms of $T$ and the various subalgebras that we are using to build up $T$ as a purported Ore extension. Our aim is to show that the commutation relations developed in Section~\ref{section-relations} can be rephrased by using suitable choices of elements $h\in\ch$.

\begin{definition} \label{definition-hvalues} 
Set $h_{m_{kl}}:= (\alpha_1,\dots,\alpha_m;\beta_1,\dots,\beta_n)$ 
where (i) $\alpha_s=1$ when $s\in A$, and $\alpha_s=q^{-1}$ when $s\not\in A$, and (ii) 
$\beta_{c_{(n-t+1)-k}} =q^{-2}$, $\beta_{b_l}=q$, and $\beta_s=1$ for $s\in B\backslash\{b_l\}$, with 
$\beta_s=q^{-1}$ for $s\not\in B\sqcup \{c_{(n-t+1)-k}\}$.

Also, set $h_{n_{kl}}:= (\alpha_1,\dots,\alpha_m;\beta_1,\dots,\beta_n)$ where (i) $\alpha_{r_{(m-t+1)-k}}
=q^{-2}, \alpha_l=q$, and $\alpha_s=1$ for $s\in A\backslash\{a_l\}$, while $\alpha_s=q^{-1}$ for 
$s\not\in A\sqcup\{r_{(m-t+1)-k}\}$, and (ii) $\beta_s=1$ for $s\in B$ and $\beta_s=q^{-1}$ for $s\not\in B$.
\end{definition}

Let's check the action of the  $h$ that we have just defined on relevant generators of $T$.

\begin{lemma}\label{lemma-h-actions} 
The following hold. \\
(1) For $i\in A$, $j\in B$, then 
$h_{m_{kl}}(x_{ib_{l}})=qx_{ib_{l}}$ and $h_{m_{kl}}(x_{ij})=x_{ij}$ when $j\neq b_l$. \\
(2) For $i\in A$, $j\in B$, then
$h_{n_{kl}}(x_{a_{l}j})=qx_{a_{l}j}$ and $h_{n_{kl}}(x_{ij})=x_{ij}$ when $i\neq a_l$.  \\
(3) Suppose that $(i,j)< (k,l)$ in lexicographic order, then: \\
(a) $h_{m_{kl}}(m_{ij}) = m_{ij}$ when $i\neq k$ and $j\neq l$; \\
(b) $h_{m_{kl}}(m_{ij}) = q^{-1}m_{ij}$ when $i=k$ and $j<l$ or $i<k$ and $j=l$. \\
(4) Suppose that $(i,j) < (k,l)$ in lexicographic order, then: \\
(a) $h_{n_{kl}}(n_{ij}) = n_{ij}$ when $i\neq k$ and $j\neq l$; \\
(b) $h_{n_{kl}}(n_{ij}) = q^{-1}n_{ij}$ when $i=k$ and $j<l$ or $i<k$ and $j=l$. \\
(5) $h_{n_{kl}}(m_{ij}) = m_{ij}$, for all $n_{kl}$ and $m_{ij}$.
\end{lemma}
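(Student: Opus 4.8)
The statement is entirely a matter of evaluating the torus action on quantum minors, so the plan is to compute the weight of each relevant generator under the prescribed toral elements using the fact that the weight of a quantum minor $[I\mid J]$ is multiplicative: if $h=(\alpha_1,\dots,\alpha_m;\beta_1,\dots,\beta_n)$ then $h\cdot[I\mid J]=\bigl(\prod_{i\in I}\alpha_i\bigr)\bigl(\prod_{j\in J}\beta_j\bigr)[I\mid J]$. This follows because $h$ acts as an algebra automorphism of $\oqmnk$ sending each $x_{ij}$ to $\alpha_i\beta_j x_{ij}$, hence scales the monomial $x_{i_1\sigma(i_1)}\cdots x_{i_t\sigma(i_t)}$ appearing in the expansion of $[I\mid J]$ by $\bigl(\prod_{i\in I}\alpha_i\bigr)\bigl(\prod_{j\in J}\beta_j\bigr)$, independently of $\sigma$. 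So for every part it suffices to read off which $\alpha$'s and $\beta$'s occur, according to Definition~\ref{definition-hvalues}, and multiply.

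First I would dispatch (1) and (2): for $h_{m_{kl}}$ all row weights $\alpha_i$ with $i\in A$ are $1$, so $h_{m_{kl}}\cdot x_{ij}=\beta_j x_{ij}$ for $i\in A$; since $\beta_{b_l}=q$ and $\beta_{b_j}=1$ for $b_j\in B\setminus\{b_l\}$, this gives exactly (1), and (2) is symmetric with the roles of rows and columns exchanged (now using $\alpha_l=q$, $\alpha_s=1$ for $s\in A\setminus\{a_l\}$, and $\beta_s=1$ for $s\in B$). For (5): $h_{n_{kl}}$ has $\beta_s=1$ for all $s\in B$, and $m_{ij}=[A\mid B\sqcup\{c_{(n-t+1)-i}\}\setminus\{b_j\}]$ has row set $A$ and column set contained in $B\sqcup\{c_{(n-t+1)-i}\}$; I need $\beta_{c_{(n-t+1)-i}}=1$ as well, which holds because $c_{(n-t+1)-i}\notin A\sqcup\{r_{(m-t+1)-k}\}$ is a column index untouched by the row-only modification defining $h_{n_{kl}}$, so every $\beta$ entry of $h_{n_{kl}}$ on a column is $1$ except those indexed by $B$-complement rows, which are irrelevant here — wait, more carefully: $h_{n_{kl}}$ sets $\beta_s=1$ for $s\in B$ and $\beta_s=q^{-1}$ for $s\notin B$, so I must also account for the column $c_{(n-t+1)-i}\notin B$, which contributes $q^{-1}$, while the removed column $b_j$ would have contributed $1$; hence $h_{n_{kl}}\cdot m_{ij}$ has a factor $\prod_{s\in A}\alpha_s$ times $q^{-1}$. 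This shows (5) needs the row weights to supply a compensating $q$: indeed $\prod_{s\in A}\alpha_s = q$ since exactly one index of $A$, namely $a_l$, gets weight $q$ and the rest get $1$. So $h_{n_{kl}}\cdot m_{ij}=q\cdot q^{-1}\cdot m_{ij}=m_{ij}$, as claimed — \emph{provided} $a_l\in A$, i.e. $l\le t$, which holds since $j<l$ forces $l\ge 2$ and by construction $n_{kl}$ defined means $a_l$ is a genuine element of $A$.

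Next, parts (3) and (4), which are the only ones requiring care. For (3), $m_{ij}$ has row set $A$, and $h_{m_{kl}}$ gives weight $1$ to every row in $A$, so the whole contribution comes from the column set $(B\setminus\{b_j\})\sqcup\{c_{(n-t+1)-i}\}$. Under $h_{m_{kl}}$: columns in $B\setminus\{b_l\}$ have weight $1$, column $b_l$ has weight $q$, column $c_{(n-t+1)-k}$ has weight $q^{-2}$, and every other column (those not in $B\sqcup\{c_{(n-t+1)-k}\}$) has weight $q^{-1}$. Now I split into cases on whether $b_j=b_l$ (i.e. $j=l$) and whether $c_{(n-t+1)-i}=c_{(n-t+1)-k}$ (i.e. $i=k$). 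When $i\ne k$ and $j\ne l$, the column set of $m_{ij}$ contains $b_l$ (weight $q$, since $j\ne l$ means $b_l$ is not removed) and does not contain $c_{(n-t+1)-k}$ (since $i\ne k$), and $c_{(n-t+1)-i}$ is a non-$B$, non-$c_{(n-t+1)-k}$ column so contributes $q^{-1}$; all remaining columns lie in $B\setminus\{b_l\}$ and contribute $1$; total $q\cdot q^{-1}=1$, giving (3a). When $i=k$, $j<l$: now $c_{(n-t+1)-i}=c_{(n-t+1)-k}$ is in the column set with weight $q^{-2}$, $b_l$ is present (weight $q$, as $j\ne l$), rest contribute $1$; total $q^{-2}\cdot q=q^{-1}$. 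When $i<k$, $j=l$: now $b_l=b_j$ is removed, $c_{(n-t+1)-i}\ne c_{(n-t+1)-k}$ contributes $q^{-1}$, and $c_{(n-t+1)-k}$ is absent; the remaining columns are all in $B\setminus\{b_l\}$ with weight $1$; total $q^{-1}$ — this gives (3b). Part (4) is the transpose of (3) under swapping rows/columns and $m\leftrightarrow n$, so the same bookkeeping applies verbatim.

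\textbf{Main obstacle.} There is no deep obstacle; the only real risk is a sign-of-exponent or off-by-one slip in the case analysis of (3b)/(4b), particularly in tracking which of $c_{(n-t+1)-i}$, $c_{(n-t+1)-k}$ is or is not a member of the column set $(B\setminus\{b_j\})\sqcup\{c_{(n-t+1)-i}\}$ and reconciling the convention that $q$ and $q^{-1}$ are interchanged relative to \cite{pw}. I would therefore present the weight-multiplicativity principle cleanly up front, then organize parts (3) and (4) into the two subcases each, and double-check the exponent tally against the $\oq(M_{2,4}(K))$ micro-example from Section~\ref{section-relations} (where $[12|24][12|13]-[12|13][12|24]=(q^{-1}-q)[12|14][12|23]$) to confirm the $q^{-1}$ scalars are consistent with the $q^{-1}$-quantum-matrix relations the $m_{ij}$ satisfy.
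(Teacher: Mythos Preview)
Your proposal is correct and follows exactly the same approach as the paper: both exploit the multiplicativity of the torus weight on quantum minors, $h\cdot[I\mid J]=\bigl(\prod_{i\in I}\alpha_i\bigr)\bigl(\prod_{j\in J}\beta_j\bigr)[I\mid J]$, and then read off the relevant $\alpha$'s and $\beta$'s from Definition~\ref{definition-hvalues}. The paper in fact only writes out (1) and (3a) and declares the rest ``similar'', whereas you go further and verify (3b) and (5) explicitly; your brief aside at the end of (5) about needing $a_l\in A$ is unnecessary (the index $l$ in $n_{kl}$ runs over $\{1,\dots,t\}$ by definition, so $a_l\in A$ automatically, and it has nothing to do with the $j$ from $m_{ij}$), but it does no harm.
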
 

\begin{proof} We prove (1) and (3a). The proofs of all other claims are similar to one of these two.\\  
(1) $h_{m_{kl}}(x_{ij}) =\alpha_i\beta_j x_{ij}$. Now $i\in A$, so $\alpha_i=1$ and 
$h_{m_{kl}}(x_{ij}) =\beta_j x_{ij}$, 
which is equal to $qx_{ij}$ when $j=b_l$ and equal to $1.x_{ij}$ otherwise. \\
(3a) Suppose that $i\neq k$ and $j\neq l$. Now, 
$m_{ij}:=[A\mid B\sqcup \{c_{(n-t+1)-i}\}\backslash\{b_j\}]$ with $b_j<c_{(n-t+1)-i}$, 
and $m_{kl}:=[A\mid B\sqcup\{c_{(n-t+1)-k}\}\backslash\{b_l\}]$ with 
$b_l<c_{(n-t+1)-k}$.

Let $h=h_{m_{kl}}=(\alpha_1,\dots,\alpha_m;\beta_1\dots,\beta_n)$. Then 
\begin{eqnarray*}
\lefteqn{h_{m_{kl}}(m_{ij})}\\
&=&
h_{m_{kl}}( [A\mid B\sqcup \{c_{(n-t+1)-i}\}\backslash\{b_j\}])\\
&=&
\alpha_{a_1}\dots\alpha_{a_t}\beta_{b_1}\dots \widehat{\beta_{b_j}}\dots \beta_{b_t}\beta_{c_{(n-t+1)-i}}
[A\mid B\sqcup \{c_{(n-t+1)-i}\}\backslash\{b_j\}]\\
& = &
\alpha_{a_1}\dots\alpha_{a_t}\beta_{b_1}\dots \widehat{\beta_{b_j}}\dots \beta_{b_t}\beta_{c_{(n-t+1)-i}}
m_{ij} 
\end{eqnarray*}

Hence, we need to evaluate $\lambda:= \alpha_{a_1}\dots\alpha_{a_t}\beta_{b_1}
\dots \widehat{\beta_{b_j}}\dots \beta_{b_t}\beta_{c_{(n-t+1)-i}}$. From the definition of $h_{m_{kl}}$ we see that 
each $\alpha_{a_i} =1$. Also, for $s\in B\backslash\{b_l\}$ we know that $\beta_s=1$.
Therefore, $\lambda =\beta_{b_l}\beta_{c_{(n-t+1)-i}}$. 
We know that $\beta_{b_l}=q$, so it remains to evaluate $\beta_{c_{(n-t+1)-i}}$. 
As $i\neq k$ it follows that $c_{(n-t+1)-i}\neq c_{(n-t+1)-k}$ so that $c_{(n-t+1)-i}\not\in B\sqcup \{c_{(n-t+1)-k}\}$ and so $\beta_{c_{(n-t+1)-i}}=q^{-1}$. Hence, $\lambda=\beta_{b_l}\beta_{c_{(n-t+1)-i}}=qq^{-1}=1$.
\end{proof} 

 The previous lemma, together with the results obtained in Section~\ref{section-relations}
 are sufficient to establish  the following result.
 
 \begin{proposition} 
 (i) For $x_{ij}\in S$ and for any $m_{kl}$ that is defined, 
 \[
 m_{kl}x_{ij} - h_{m_{kl}}^{-1}(x_{ij})m_{kl}\in R(m_{kl}^-)
 \]
 
 (ii) For $x_{ij}\in S$ and for any $n_{kl}$ that is defined, 
 \[
 n_{kl}x_{ij} - h_{n_{kl}}^{-1}(x_{ij})n_{kl}\in R(n_{kl}^-)
 \]
 
(iii)  $m_{kl}m_{ij}-h_{m_{kl}}^{-1}(m_{ij})m_{kl}  \in R(m_{kl}^-)$ for $(i,j)< (k,l)$ 
in lexicographic order

(iv) $n_{kl}m_{ij} = h_{n_{kl}}^{-1}(m_{ij})n_{kl}$ for all $(i,j)$ and $(k,l)$

(v) $n_{kl}n_{ij}-h_{n_{kl}}^{-1}(n_{ij})n_{kl}  \in R(n_{kl}^-)$ for $(i,j)< (k,l)$
in lexicographic order.
 \end{proposition}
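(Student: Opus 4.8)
The plan is to read off each of the five statements directly from the commutation relations established in Section~\ref{section-relations}, replacing the explicit right-hand sides by the torus-element rephrasing provided by Lemma~\ref{lemma-h-actions}. The unifying observation is that in every relevant commutation identity the ``leading term'' coefficient is exactly the scalar by which the appropriate $h$ acts on the variable being commuted past, while all remaining terms lie in the previously constructed subalgebra.

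For (i): fix $x_{ij}\in S$, so $i\in A$ and $j\in B$. If $j\neq b_l$, the Lemma in Section~\ref{section-relations}(i) gives $x_{ij}m_{kl}=m_{kl}x_{ij}$, i.e. $m_{kl}x_{ij}=x_{ij}m_{kl}$, and by Lemma~\ref{lemma-h-actions}(1) we have $h_{m_{kl}}^{-1}(x_{ij})=x_{ij}$, so the displayed difference is $0\in R(m_{kl}^-)$. If $j=b_l$, the same Lemma gives $x_{ib_l}m_{kl}-qm_{kl}x_{ib_l}=\qhat\sum_{s<l}\minusqdot\, m_{ks}x_{ib_s}$; rearranging, $m_{kl}x_{ib_l}=q^{-1}x_{ib_l}m_{kl}-q^{-1}\qhat\sum_{s<l}\minusqdot\, m_{ks}x_{ib_s}$. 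By Lemma~\ref{lemma-h-actions}(1), $h_{m_{kl}}^{-1}(x_{ib_l})=q^{-1}x_{ib_l}$, so $m_{kl}x_{ij}-h_{m_{kl}}^{-1}(x_{ij})m_{kl}$ equals a scalar multiple of $\sum_{s<l}\minusqdot\, m_{ks}x_{ib_s}$. Each $m_{ks}$ with $s<l$ precedes $m_{kl}$ in the lexicographic order, hence lies in $R(m_{kl}^-)$, and $x_{ib_s}\in S\subseteq R(m_{kl}^-)$, so the whole sum lies in $R(m_{kl}^-)$. Part (ii) is identical with $n$, $h_{n_{kl}}$, Lemma~\ref{lemma-h-actions}(2) in place of their $m$-counterparts.

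For (iii): by Proposition (the one preceding Section~\ref{section-torus-actions}), the $m_{ij}$ obey the quantum-matrix relations with parameter $q^{-1}$. So for $(i,j)<(k,l)$ there are three cases. If $i=k$, $j<l$: $m_{ij}m_{kl}=q^{-1}m_{kl}m_{ij}$ (a row relation), giving $m_{kl}m_{ij}=qm_{ij}m_{kl}$; and $h_{m_{kl}}^{-1}(m_{ij})=q\,m_{ij}$ by Lemma~\ref{lemma-h-actions}(3b), so the difference is $0$. If $i<k$, $j=l$: $m_{kl}m_{ij}=qm_{ij}m_{kl}$ likewise, again matched by Lemma~\ref{lemma-h-actions}(3b), difference $0$. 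If $i<k$, $j\neq l$: when $j>l$ the ordering on $\mcr$ forces the quantum-matrix ``zero bracket'' relation $m_{kl}m_{ij}=m_{ij}m_{kl}$, matched by $h_{m_{kl}}^{-1}(m_{ij})=m_{ij}$ from Lemma~\ref{lemma-h-actions}(3a); when $j<l$ the nontrivial relation from the claim in Section~\ref{section-relations} gives $m_{ij}m_{kl}-m_{kl}m_{ij}=(q^{-1}-q)m_{il}m_{kj}$, so $m_{kl}m_{ij}=m_{ij}m_{kl}-(q^{-1}-q)m_{il}m_{kj}$; since $h_{m_{kl}}^{-1}(m_{ij})=m_{ij}$ here, the difference is $-(q^{-1}-q)m_{il}m_{kj}$, and one checks $(i,l)<(k,l)$ and $(k,j)<(k,l)$ in lexicographic order, so both factors, hence the product, lie in $R(m_{kl}^-)$. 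Part (v) is the verbatim analogue for the $n_{ij}$. Part (iv) is the cleanest: Proposition~(preceding)(ii) says $m_{ij}n_{kl}=n_{kl}m_{ij}$, i.e. $n_{kl}m_{ij}=m_{ij}n_{kl}$, and Lemma~\ref{lemma-h-actions}(5) says $h_{n_{kl}}^{-1}(m_{ij})=m_{ij}$, so equality holds with no error term at all.

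The only real bookkeeping obstacle is the index arithmetic verifying, in the nontrivial cases of (iii) and (v), that the minors appearing on the right-hand side (such as $m_{il},m_{kj}$) are defined and strictly precede $m_{kl}$; but this is exactly what was already checked inside the proof of the claim in Section~\ref{section-relations} (where $m_{il}$ and $m_{kj}$ were shown to be defined), combined with the elementary fact that in lexicographic order $(i,l)<(k,l)$ when $i<k$ and $(k,j)<(k,l)$ when $j<l$. Everything else is a direct substitution of Lemma~\ref{lemma-h-actions} into the identities of Section~\ref{section-relations}, so the write-up will consist of the case analysis above with these routine verifications noted but not belaboured.
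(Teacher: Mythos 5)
Your proposal is correct and follows exactly the route the paper intends: the paper gives no written proof but states that the commutation relations of Section~\ref{section-relations} together with Lemma~\ref{lemma-h-actions} suffice, and your case-by-case substitution (matching leading coefficients with the $h$-eigenvalues and placing the error terms, such as $m_{ks}x_{ib_s}$ and $m_{il}m_{kj}$, in the earlier subalgebra) is precisely that argument carried out.
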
 
 
 We can use this proposition to show that at each stage in the construction of $T$ we have an Ore extension. In order to do this, we need to utilise the following result concerning Gelfand-Kirillov dimension of extensions of the type considered in the previous result. See \cite{kl} for standard properties of Gelfand-Kirillov dimension. 
 
\begin{lemma}\label{ore-extension-condition-2}
Let $B$ be a $\k$-algebra. Suppose $A$ is a finitely generated subalgebra of $B$ that is an integral domain with finite Gelfand-Kirillov dimension and 
that $x$  is an
element of $B$ such that $B$ is generated by $A$ and $x$ as an algebra. Furthermore, suppose
there exists an automorphism $\sigma$ of $A$ and  finite-dimensional subspace $V$ of $A$ that generates $A$ as an algebra such that $\sigma(V)=V$.  Suppose that 
$xa-\sigma(a)x\in A$, 
for each $a\in A$. Then $\gkdim(B)\leq\gkdim(A)+1$. 

Also, \\
(i) $\delta: A\longrightarrow A$, defined by $\delta(a):=xa-\sigma(a)x$, is a 
$\sigma$-derivation of $A$,  and \\
(ii) if $C:=A[y;\sigma,\delta]$, the natural algebra morphism $\theta \, : \, C \longrightarrow B$ such 
that 
$\theta_{| A} = {\rm id}_A$ and $\theta(y)=x$ is an isomorphism if 
only if $\gkdim(B)=\gkdim(A)+1$. 
\end{lemma}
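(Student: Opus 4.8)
The plan is to prove the Gelfand--Kirillov dimension bound first and then deduce the two numbered consequences, using as the main engine the fact that the Ore extension $C = A[y;\sigma,\delta]$ always surjects onto $B$ via $\theta$. The surjectivity of $\theta$ is immediate since $B$ is generated by $A$ and $x$, and $\theta_{|A} = \mathrm{id}_A$, $\theta(y) = x$; the only issue is whether $\theta$ is injective. To set this up properly I first need to know that $\delta$ is well defined as a $\sigma$-derivation, which is exactly part (i), so I would prove (i) before the dimension bound.

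For part (i): the map $\delta(a) := xa - \sigma(a)x$ lands in $A$ by hypothesis, so $\delta : A \to A$ is $\k$-linear. The $\sigma$-Leibniz rule $\delta(ab) = \sigma(a)\delta(b) + \delta(a)b$ is a direct computation: $\delta(ab) = xab - \sigma(ab)x = (xa - \sigma(a)x)b + \sigma(a)(xb - \sigma(b)x) = \delta(a)b + \sigma(a)\delta(b)$, where I use that $\sigma$ is an algebra map. No hypothesis beyond ``$xa - \sigma(a)x \in A$ for all $a$'' and ``$\sigma \in \mathrm{Aut}(A)$'' is needed here; the subspace $V$ and the condition $\sigma(V) = V$ play no role in (i).

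For the dimension bound $\gkdim(B) \le \gkdim(A) + 1$: this is where $V$ with $\sigma(V) = V$ and $\dim_\k V < \infty$ enters. Form the finite-dimensional subspace $W := V + \k x + \k 1 \subseteq B$, which generates $B$. The key claim is that $W^{n} \subseteq A_{cn} + A_{cn}x + \dots + A_{cn}x^{n}$ for a suitable constant $c$, where $A_{m} := (V + \k 1)^{m}$; equivalently, using the relation $xa = \sigma(a)x + \delta(a)$ one can ``move all the $x$'s to the right'' at the cost of increasing the $A$-degree by a bounded amount per swap. The crucial point making the $A$-degree growth linear rather than exponential is that both $\sigma(V) = V$ (so $\sigma$ does not increase $V$-degree) and $\delta(V) \subseteq A$ is a fixed finite-dimensional space, say $\delta(V) \subseteq A_{d}$ for some fixed $d$; then pushing one $x$ past a degree-$\le m$ element of $A$ produces a sum of terms of $A$-degree $\le m + d$ (the $\delta$ term) and $\le m$ (the $\sigma$ term), so a word in $W^{n}$ rearranges into $\sum_{k=0}^{n} A_{O(n)}x^{k}$. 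Hence $\dim_\k W^{n} \le (n+1)\dim_\k A_{O(n)}$, and since $\gkdim(A) < \infty$ this gives $\dim_\k W^{n} \le (n+1)\cdot(\text{polynomial of degree } \gkdim(A) \text{ in } n)$, so $\gkdim(B) \le \gkdim(A) + 1$. The main obstacle is precisely bookkeeping this $A$-degree bound carefully enough to see the growth is polynomial of the right degree; the hypothesis $\sigma(V) = V$ is what prevents $\sigma$-iterates from blowing up the degree, and $\gkdim(A) < \infty$ (with $A$ a finitely generated domain) is what lets us replace $\dim_\k A_{O(n)}$ by a polynomial in $n$.

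For part (ii): since $\theta : C \to B$ is always surjective, it is an isomorphism iff $\ker\theta = 0$. Now $C = A[y;\sigma,\delta]$ is a free left $A$-module with basis $1, y, y^{2}, \dots$, and $A$ is a domain with $\gkdim(A) < \infty$; standard facts about Ore extensions give $\gkdim(C) = \gkdim(A) + 1$ (this uses that $\sigma(V) = V$ for a generating subspace, which is given, so the Ore extension has the expected GK dimension --- one can cite \cite{kl}, or note $C$ itself satisfies the hypotheses of the first part of the lemma with $B = C$, $x = y$, giving $\gkdim C \le \gkdim A + 1$, while the reverse inequality is clear since $A \subseteq C$ and $y$ is not integral over $A$). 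If $\theta$ is an isomorphism then $\gkdim(B) = \gkdim(C) = \gkdim(A)+1$. Conversely, suppose $\gkdim(B) = \gkdim(A) + 1$ but $\theta$ is not injective; then $\ker\theta$ is a nonzero two-sided ideal of $C$, and $B \cong C/\ker\theta$. Because $C$ is an Ore extension of the domain $A$, a nonzero ideal $J = \ker\theta$ contains, after choosing an element of minimal $y$-degree and clearing, a nonzero element whose leading $A$-coefficient generates (or one passes to the leading-term analysis) — the upshot is that factoring out any nonzero ideal strictly drops GK dimension: $\gkdim(C/J) \le \gkdim(C) - 1 = \gkdim(A)$, contradicting $\gkdim(B) = \gkdim(A)+1$. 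Hence $\theta$ is injective, so an isomorphism. The delicate point here is justifying that a nonzero ideal of the Ore extension $C$ forces the GK dimension to drop by at least one; this follows from the fact that $C$ is a finitely generated algebra which is a domain (as $A$ is a domain and Ore extensions of domains by an automorphism are domains), together with the Ore-extension structure — an element of minimal positive $y$-degree in $J$ lets one bound $\gkdim(C/J)$ by the growth of $A$-polynomials of bounded $y$-degree, which is $\gkdim(A)$. This GK-dimension-drop argument is the part I expect to require the most care to state precisely.
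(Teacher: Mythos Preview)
Your proposal is correct and follows essentially the same logical structure as the paper's proof: verify that $\delta$ is a $\sigma$-derivation, establish $\gkdim(B)\le\gkdim(A)+1$, note that $\gkdim(C)=\gkdim(A)+1$, and then use surjectivity of $\theta$ together with the drop in Gelfand--Kirillov dimension upon factoring a domain by a nonzero ideal. The only difference is that the paper outsources the two substantive steps to the literature---\cite[Lemma~2.3]{lr-qsch} for the bound $\gkdim(B)\le\gkdim(A)+1$ and \cite[Proposition~3.15]{kl} for the fact that a proper quotient of a finitely generated domain has strictly smaller GK dimension---whereas you sketch direct arguments for both.
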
 

\begin{proof} 
Note that \cite[Lemma 2.3]{lr-qsch} guarantees that $\gkdim(B)\leq\gkdim(A)+1$. 
As $C$ is a particular example of such a $B$, we have $\gkdim(C)\leq\gkdim(A)+1$.
 However, it is well-known that $\gkdim(C)\geq\gkdim(A)+1$ (see \cite[p.164]{kl}) and so 
$\gkdim(C)=\gkdim(A)+1$.

It is routine to check that $\delta$ is a $\sigma$-derivation of $A$. 
The map $\theta:C\longrightarrow B$ given by $\theta(f(y)):=f(x)$ is an epimorphism from $C$ 
to $B$. If $\theta$ is not an isomorphism then $\gkdim(B) \le \gkdim(C)-1=\gkdim(A)$, by 
\cite[Proposition 3.15]{kl}, 
while if 
$\theta$ is an isomorphism then $\gkdim(B)=\gkdim(C)=\gkdim(A)+1$, as required. 
\end{proof}

 \begin{corollary}\label{corollary-gkdimT}
  $\gkdim(T) \leq (m+n+1)t -\sum_{i=1}^t\,(a_i+b_i)$
 \end{corollary}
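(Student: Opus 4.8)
The plan is to build up $T$ from $S$ one generator at a time, following the order on $\cm$ defined in Section~\ref{section-relations}, and to apply Lemma~\ref{ore-extension-condition-2} at each stage. First I would record the base case: $S$ is a $t\times t$ quantum matrix algebra, hence an iterated Ore extension of $\k$ in $t^2$ steps, so $\gkdim(S)=t^2$; moreover $S$ is a finitely generated domain of finite Gelfand--Kirillov dimension, with a finite-dimensional generating subspace $V_0$ spanned by the $x_{a_ib_j}$.

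Next I would run the induction. Suppose $R$ is one of the subalgebras $R(m_{kl}^-)$ or $R(n_{kl}^-)$ already constructed, assumed to be a finitely generated domain of finite Gelfand--Kirillov dimension, and let $x$ be the next generator to be adjoined (one of the $m_{kl}$ or $n_{kl}$), so that the next subalgebra $B$ is generated by $R$ and $x$. Proposition~\ref{corollary-gkdimT}'s predecessor (the unnumbered Proposition just before Lemma~\ref{ore-extension-condition-2}) tells us precisely that $x a - h_x^{-1}(a)x \in R$ for every generator $a$ of $R$, where $h_x$ is the torus element from Definition~\ref{definition-hvalues}; since $h_x^{-1}$ is an algebra automorphism and the set $x_{a_ib_j}\cup\{m_{ij}\}\cup\{n_{ij}\}$ spans a finite-dimensional $h_x^{-1}$-stable subspace $V$ generating $R$ (the $h$-action preserves this spanning set by Lemma~\ref{lemma-h-actions}), the relation $xa - h_x^{-1}(a)x \in R$ extends from the generators $a$ to all of $R$ by the Leibniz-type argument. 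Hence Lemma~\ref{ore-extension-condition-2} applies with $\sigma = h_x^{-1}|_R$, giving $\gkdim(B)\le \gkdim(R)+1$, and $B$ is again a finitely generated domain (it is a subalgebra of the domain $\oqmnk$) of finite Gelfand--Kirillov dimension, so the induction continues.

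Finally I would count the steps. Starting from $\gkdim(S)=t^2$ and adjoining $|\cm| = |\mcr| + |\cc|$ further generators, each adding at most $1$ to the Gelfand--Kirillov dimension, we get $\gkdim(T)\le t^2 + |\mcr| + |\cc|$. It remains to compute $|\mcr|$ and $|\cc|$. By definition $\mcr = \{m_{ij}\mid b_j < c_{(n-t+1)-i}\}$; for fixed $j$ the index $b_j$ must be less than $c_{(n-t+1)-i}$, and as $i$ ranges over $1,\dots,n-t$ the values $c_{(n-t+1)-i}$ range over all column indices $c_1 < \dots < c_{n-t}$ not occurring in $B$, so the number of valid $i$ for that $j$ equals the number of non-$B$ columns exceeding $b_j$, which is $n - b_j - (t-j)$ (there are $n-b_j$ indices exceeding $b_j$ in total, of which $t-j$ lie in $B$). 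Summing over $j=1,\dots,t$ gives $|\mcr| = \sum_{j=1}^t (n - b_j - t + j) = (n-t)t - \sum_{j=1}^t b_j + \sum_{j=1}^t j$. Symmetrically $|\cc| = (m-t)t - \sum_{i=1}^t a_i + \sum_{i=1}^t i$. Adding and using $t^2 + \sum_{j=1}^t j + \sum_{i=1}^t i = t^2 + t(t+1) = 2t^2 + t$, one obtains $\gkdim(T) \le t^2 + (m+n-2t)t + 2t^2 + t - \sum_{i=1}^t(a_i+b_i) = (m+n+1)t - \sum_{i=1}^t(a_i+b_i)$, as claimed.

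The main obstacle, such as it is, is the bookkeeping in the combinatorial count of $|\mcr|$ and $|\cc|$ and in verifying that the algebraic manipulation collapses to the stated closed form; the ring-theoretic part is essentially a clean induction once one observes that the hypotheses of Lemma~\ref{ore-extension-condition-2} are met at every stage, which the preceding Proposition and Lemma~\ref{lemma-h-actions} have been arranged precisely to supply. One should also take care that at the transition from the last $m_{ij}$ to the first $n_{kl}$ the relevant subalgebra $R(n_{kl}^-)$ is correctly identified (it is $S$ together with all the $m_{ij}$), but this is already built into the definitions in Section~\ref{section-relations}.
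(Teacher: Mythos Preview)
Your approach is essentially the same as the paper's: build $T$ from $\k$ one generator at a time, apply Lemma~\ref{ore-extension-condition-2} at each step using the torus automorphisms to supply $\sigma$, and then count the total number of generators. The paper organises the count slightly differently (it counts, for each excluded $a_i$ or $b_j$, the number of admissible replacement indices, obtaining $m-a_i-(t-i)$ and $n-b_j-(t-j)$ respectively), but this is exactly your computation re-indexed.

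There is one arithmetic slip in your final line. Having already absorbed the $t^2$ into $t^2+\sum_j j+\sum_i i=2t^2+t$, you then write $t^2+(m+n-2t)t+2t^2+t-\sum(a_i+b_i)$, which double-counts $t^2$; as written this equals $(m+n+1)t+t^2-\sum(a_i+b_i)$, not $(m+n+1)t-\sum(a_i+b_i)$. The correct grouping is
\[
t^2+|\mcr|+|\cc|=(m+n-2t)t+(2t^2+t)-\sum_{i=1}^t(a_i+b_i)=(m+n+1)t-\sum_{i=1}^t(a_i+b_i),
\]
which gives the stated bound.
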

 
 \begin{proof}
 Recall that $T$ is generated over $\k$ by the $t^2$ elements $x_{a_ib_j}$ together with those $t\times t$ quantum minors that are greater than $\gamma$ and differ from $\gamma$ in exactly one entry. 
Such a minor which excludes $a_i$ is given by including a row index which is  bigger than $a_i$ but not equal to 
any of the other $a_j$. There are $m-a_i -(t-i)$ such indices. Summing over $i=1,\dots,t$, 
one obtains $mt - \sum a_i -t(t-1)/2$. There are also $nt-\sum b_j -t(t-1)/2$ such quantum minors that exclude 
a $b_j$, giving a total of $(m+n)t -\sum (a_i+b_i) -t(t-1)$ such quantum minors. Adding in $t^2$ for the 
elements $x_{a_ib_j}$  produces $(m+n+1)t -\sum_{i=1}^t\,(a_i+b_i)$ generators.
At each stage that a new generator is introduced, we have an algebra $A$, a new generator $x$ to generate 
an algebra $B$ containing $A$ and 
an automorphism $\sigma$ with the 
property that $xa-\sigma(a)x\in A$ for elements $a$ in a generating set of $A$ as an algebra. As $\sigma$ is an automorphism, 
this property extends to all elements of $A$ and so the first part of Lemma~\ref{ore-extension-condition-2} is applicable to establish that $\gkdim(B)\leq \gkdim(A)+1$. There are 
$(m+n+1)t -\sum_{i=1}^t\,(a_i+b_i)$ such extensions building up $T$ from the base field $k$ and so the required inequality is obtained. 
\end{proof} 



\section{Generalised quantum determinantal rings are maximal orders}

Let $\gamma = [A|B]=[a_1,\dots,a_t\mid b_1,\dots,b_t]$ be a quantum minor in $\oqmnk$ and set $J_{\gamma}=\oqmnkg$. Then $\overline{\gamma}$ 
is a regular normal element of $J_{\gamma}$, by Corollary~\ref{corollary-minimal-element}; and so we can invert $\overline{\gamma}$ to obtain the localisation $J_{\gamma}[\overline{\gamma}^{-1}]$.
 Our aim is to show that  this localisation is isomorphic to a localisation $T[\gamma^{-1}]$ of the algebra $T$ constructed in the previous section. As a consequence, $J_{\gamma}[\overline{\gamma}^{-1}]$ will be a maximal order. From this we will deduce that 
$J_{\gamma}$ is a maximal order. 

There is a natural homorphism $\theta$ from $T[\gamma^{-1}]$ to 
$J_{\gamma}[\overline{\gamma}^{-1}]$, see below. In order to show that $\theta$ is 
surjective, we need to employ quantum Laplace expansions; while in order to see that $\theta$ is 
injective, we need to use Gelfand-Kirillov dimension calculations. The details are in the next few results.

\begin{lemma} \label{lemma-qdot-commutes}
The quantum minor $\gamma$ is a regular normal element in the algebra $T$. More precisely, 
$\gamma$ $\qdot$-commutes with each of the generators of $T$.
\end{lemma}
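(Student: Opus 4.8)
The plan is to establish both assertions of the lemma by working with the explicit generators $x_{a_ib_j}$ of $S$ and the quantum minors $m_{ij}$, $n_{ij}$ of $\cm$, since $T$ is generated by these. For the generators of $S$, the relevant commutation relations are already built into $\oqmnk$: since $\gamma=[A\mid B]$ is a quantum minor and $a_i\in A$, $b_j\in B$, we have $x_{a_ib_j}\gamma=\gamma x_{a_ib_j}$, so $\gamma$ trivially $\qdot$-commutes with the generators of $S$ (the power of $q$ being $q^0=1$).

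Next I would handle the quantum minors in $\cm$. Each $m_{ij}=[A\mid B\sqcup\{c_{(n-t+1)-i}\}\backslash\{b_j\}]$ has the same row set $A$ as $\gamma$ and differs from $\gamma$ only in one column entry; similarly each $n_{ij}=[A\sqcup\{r_{(m-t+1)-i}\}\backslash\{a_j\}\mid B]$ shares the column set $B$ with $\gamma$. So the relevant commutation rule is between two quantum minors sharing a row set (respectively a column set) in $\oqmnk$. Using the quantum Muir's Law of extensible minors (as in \cite[Proposition 1.3]{lr-qsch}), it suffices to verify the corresponding identity for small quantum minors: for $\gamma$ versus $m_{ij}$, one reduces to two $1\times 1$ rows sharing the same row set (or, after stripping the common columns of $B$, to quantum minors $[a\mid b_j]$-type expressions) and checks in a small quantum matrix algebra $\oq(M_{1,2}(K))$ or $\oq(M_{2,3}(K))$ that the two minors $q$-commute with an explicit power of $q$ determined by the relative position of the differing column index. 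The upshot is that $\gamma\, m_{ij}=q^{e}m_{ij}\,\gamma$ for a suitable integer $e$ (depending only on $b_j$ versus the $b$'s and $c$'s), and symmetrically $\gamma\, n_{ij}=q^{f}n_{ij}\,\gamma$.

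Once each generator $\qdot$-commutes with $\gamma$, it follows formally that $\gamma g\in K^*\, g\gamma$ for every generator $g$ of $T$, hence $\gamma w\in K^*\, w\gamma$ for every monomial $w$ in the generators, and so $\gamma T=T\gamma$; that is, $\gamma$ is a normal element of $T$. Regularity is immediate: $T$ is a subalgebra of $\oqmnk$, which is a domain (being an iterated skew polynomial extension of $\k$), so $\gamma$, being nonzero, is a nonzerodivisor in $T$.

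The main obstacle is the bookkeeping of the $q$-powers: making sure that for \emph{every} $m_{ij}$ and $n_{ij}$ the commutation with $\gamma$ is genuinely a \emph{scalar} commutation (with no lower-order correction terms of the type appearing in the Lemma on commutation with generators of $S$, where $\qhat\sum_{s<j}\minusqdot m_{is}x_{a_kb_s}$ arises). Here the point is that $\gamma$ and $m_{ij}$ share the full row set $A$, so the Laplace-type expansion that produced those correction terms collapses — one must check that the extra index $c_{(n-t+1)-i}$, being a column not in $B$, combines cleanly with the column set of $\gamma$ so that only a power of $q$ survives. I expect this to follow from the standard commutation relations between quantum minors with a common row set (respectively column set), but it is the step that requires care; everything else is either already in the excerpt (normality propagates from generators, by the same argument as in Corollary~\ref{corollary-minimal-element}) or a one-line appeal to $\oqmnk$ being a domain.
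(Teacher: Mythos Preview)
Your proposal is correct and follows essentially the same approach as the paper: the generators $x_{a_ib_j}$ commute with $\gamma$ because they lie in the subalgebra of which $\gamma$ is the quantum determinant, and for $m_{ij}$ (resp.\ $n_{ij}$) one applies quantum Muir's Law to strip the common row and column indices, reducing to a $q$-commutation between two variables on the same row (resp.\ column). The paper is slightly more explicit---it names the reduced pair as $x_{a_1b_j}$ and $x_{a_1c_{(n-t+1)-i}}$---whereas you speak more vaguely of checking in $\oq(M_{1,2}(K))$ or $\oq(M_{2,3}(K))$, but the content is the same, and your concern about correction terms is indeed resolved exactly as you surmise: sharing the full row set $A$ collapses the Laplace-type terms.
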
 

\begin{proof}
A variable $x_{ij}$ is in the generating set for $T$ as an algebra precisely when $i\in A$ and $j\in B$, in which case $x_{ij}$ commutes with $\gamma$. 

Recall that  $m_{ij} = [A\mid B\sqcup\{c_{(n-t+1)-i}\}\backslash\{b_j\}]$, with $b_j<c_{(n-t+1)-i}$. 
A simple application of the quantum Muir's law \cite[Proposition 1.3]{lr-qsch} shows that the commutation relation between $\gamma=[A\mid B]$ and $m_{ij}$ is the same as that between $x_{a_1b_j}$ and $x_{a_1c_{(n-t+1)-i}}$, and this is a $q$-commutation as these two variables are on the same row of a quantum matrix. A similar remark applies to the 
commutation relations between $\gamma$ and the $n_{ij}$ with the roles of rows and columns interchanged. 
\end{proof}

As a consequence of the previous lemma, we can form the localisation $T[\gamma^{-1}]$ of $T$ 
obtained by inverting the powers of $\gamma$ and the canonical morphism $T \longrightarrow T[\gamma^{-1}]$
is injective. Gelfand-Kirillov dimension behaves well 
with respect to this localisation, as we see below. 

\begin{lemma} \label{lemma-gk-values}
(i) $\gkdim(T[\gamma^{-1}]) = \gkdim(T)\leq(m+n+1)t -\sum_{i=1}^t\,(a_i+b_i)$.\\
(ii) $\gkdim(J_{\gamma}[\overline{\gamma}^{-1}])=\gkdim(J_{\gamma})=(m+n+1)t -\sum_{i=1}^t\,(a_i+b_i)$.\\
(iii) $\gkdim(T[\gamma^{-1}]) \leq \gkdim(J_{\gamma}[\overline{\gamma}^{-1}])$.
\end{lemma}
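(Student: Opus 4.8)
The three statements divide naturally into two parts: the equalities and upper bound in (i) and (ii), and the comparison inequality (iii). For (i), the equality $\gkdim(T[\gamma^{-1}])=\gkdim(T)$ is a standard fact about Gelfand-Kirillov dimension of Ore localisations: since $\gamma$ is a regular normal element of the affine noetherian domain $T$ (by Lemma \ref{lemma-qdot-commutes}) that $q^\bullet$-commutes with the generators, $T[\gamma^{-1}]$ is a finite union of translates $\gamma^{-k}T$, and $\gkdim$ is invariant under such localisations (see \cite{kl}); the inequality $\gkdim(T)\leq (m+n+1)t-\sum_i(a_i+b_i)$ is exactly Corollary \ref{corollary-gkdimT}. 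For (ii), the equality $\gkdim(J_\gamma[\overline{\gamma}^{-1}])=\gkdim(J_\gamma)$ is the same localisation argument applied to the regular normal element $\overline{\gamma}$ of $J_\gamma$ furnished by Corollary \ref{corollary-minimal-element}; computing $\gkdim(J_\gamma)=(m+n+1)t-\sum_i(a_i+b_i)$ exactly is the substantive point, and I would obtain it from the QGASL structure: $J_\gamma$ has a $\k$-basis of standard monomials on the poset $p(\Pi\setminus\Pi_\gamma)$, so $\gkdim(J_\gamma)$ equals the maximal length of a chain of minors $\geq_{\st}\gamma$ in $\Pi$, and a direct combinatorial count of such chains (each step replacing one row or column index of $\gamma$ by a strictly larger unused one, or deleting from the size) yields precisely the stated value — indeed the same count as in the proof of Corollary \ref{corollary-gkdimT}. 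Alternatively, and perhaps more cleanly, one can cite the dimension formula for quantum Schubert-type varieties from \cite{lr-qsch}.

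For (iii), the idea is to exhibit an algebra embedding $T[\gamma^{-1}]\hookrightarrow J_\gamma[\overline{\gamma}^{-1}]$ and then invoke that $\gkdim$ does not increase under passage to a subalgebra. The natural map is induced by the canonical projection $p:\oqmnk\to J_\gamma$: since $T$ is a subalgebra of $\oqmnk$ generated by $S$ and the minors in $\cm$, all of which lie in $\Pi\setminus\Pi_\gamma$ (they are $\geq_{\st}\gamma$), the restriction $p|_T$ sends $\gamma$ to the regular element $\overline{\gamma}$, hence extends to a $\k$-algebra homomorphism $\theta:T[\gamma^{-1}]\to J_\gamma[\overline{\gamma}^{-1}]$. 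Then $\gkdim(T[\gamma^{-1}])=\gkdim(\theta(T[\gamma^{-1}]))\leq\gkdim(J_\gamma[\overline{\gamma}^{-1}])$ provided $\theta$ is injective — but that injectivity is exactly what we are trying to establish later and cannot be assumed here. So instead I would argue the inequality without injectivity: $\gkdim$ of a homomorphic image is at most $\gkdim$ of the source, which gives the wrong direction. The correct route is to note that $T[\gamma^{-1}]$ is itself a domain of \emph{finite} GK-dimension, and that the image $\theta(T[\gamma^{-1}])$ is a subalgebra of $J_\gamma[\overline{\gamma}^{-1}]$, so $\gkdim(\theta(T[\gamma^{-1}]))\leq\gkdim(J_\gamma[\overline{\gamma}^{-1}])$; combined with $\gkdim(T[\gamma^{-1}])\geq\gkdim(\theta(T[\gamma^{-1}]))$ this still goes the wrong way.

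The resolution, and the real content of (iii), is therefore to show $\theta$ is \emph{injective} after all — or, more precisely, that it has finite-codimensional-type behaviour forcing the GK inequality. The honest plan is: prove that $\theta$ is injective directly, by observing that $T$ embeds in the localisation $\oqmnk[\gamma^{-1}]$ (since $\gamma$ is regular in $\oqmnk$), that $p$ extends to $\oqmnk[\gamma^{-1}]\to J_\gamma[\overline{\gamma}^{-1}]$, and that one must check no nonzero element of $T[\gamma^{-1}]$ maps to zero; for this I would use that the standard monomials in the generators of $T$ remain linearly independent in $J_\gamma$ because they are standard monomials on $p(\Pi\setminus\Pi_\gamma)$, which form a basis of $J_\gamma$ by the QGASL property. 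Once $\theta$ is injective, $T[\gamma^{-1}]$ is isomorphic to a subalgebra of $J_\gamma[\overline{\gamma}^{-1}]$ and (iii) follows immediately from monotonicity of $\gkdim$ under subalgebras. I expect the injectivity of $\theta$ to be the main obstacle: it requires knowing enough about the straightening law in $J_\gamma$ to control how products of the $m_{ij}, n_{ij}$ and $x_{a_ib_j}$ straighten, and to see that relations among them in $\oqmnk$ already hold in $T$ so that nothing new is killed by $p$. (It is conceivable the authors sidestep this by proving (iii) via an explicit spanning argument — showing $T$ spans $J_\gamma$ locally so that $\theta$ is surjective, then deducing the GK inequality from surjectivity plus the bound in (i) — which would be the cleaner organisational choice and is the route I would pursue if the direct injectivity proof proves unwieldy.)
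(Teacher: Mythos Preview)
Your plans for (i) and (ii) are essentially right and match the paper's approach: for (i) the paper gives a short explicit argument (choosing a finite-dimensional generating space $V$ with $\gamma V=V\gamma$ and comparing growth of $W^n$ and $(W\gamma)^n$) rather than citing a general localisation fact, and then invokes Corollary~\ref{corollary-gkdimT}; for (ii) it runs the same localisation argument with $\overline{\gamma}$ and simply cites \cite[Remark 4.2(iii)]{lr-qsch} for the exact value of $\gkdim(J_\gamma)$, as you suggest in your alternative.

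Part (iii), however, you have badly overcomplicated, and your proposed route is circular. The inequality in (iii) is \emph{immediate arithmetic} from (i) and (ii): the right-hand side of the inequality in (i) is exactly the number computed in (ii), so
\[
\gkdim(T[\gamma^{-1}]) \;\leq\; (m+n+1)t-\sum_{i=1}^t(a_i+b_i) \;=\; \gkdim(J_\gamma[\overline{\gamma}^{-1}]).
\]
That is the whole proof of (iii) in the paper. There is no need for any map $\theta$, any injectivity, or any straightening argument at this stage.

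Worse, your attempt to prove (iii) via injectivity of $\theta$ would invert the logical flow of the paper. In Proposition~\ref{proposition-isomorphism} the authors first prove $\theta$ is \emph{surjective} (by quantum Laplace expansions), then combine that surjectivity with Lemma~\ref{lemma-gk-values}(iii) to deduce $\gkdim(T[\gamma^{-1}])=\gkdim(J_\gamma[\overline{\gamma}^{-1}])$, and only then conclude injectivity from \cite[Proposition 3.15]{kl} (a nonzero kernel in a noetherian domain would force a strict drop in $\gkdim$). So injectivity of $\theta$ is a \emph{consequence} of (iii), not a tool for proving it; your plan would be genuinely circular. Your final parenthetical guess about a surjectivity route is closer in spirit to what happens later, but even that is not needed for (iii) itself.
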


\begin{proof}
(i) Let $V$ be the vector space generated by the generators of $T$ (that is, the $x_{ij}, m_{ij}$ and $n_{ij}$). Then the previous lemma shows that 
$\gamma V=V\gamma$. Set $W:= V+\gamma^{-1}\k$ and note that $W$ generates $T[\gamma^{-1}]$ as an 
algebra and that $W\gamma\subseteq T$. Set $Y:= W\gamma +\k\gamma +\k$, a finite dimensional vector subspace of $T$. It is easy to check that $W^n\gamma^n\subseteq Y^n$. 
It follows that $\dim(W^n)\leq \dim(Y^n)$ and so $\gkdim(T[\gamma^{-1}]) \leq \gkdim(T)$. As it is 
obvious that $\gkdim(T[\gamma^{-1}]) \geq \gkdim(T)$, equality follows. The inequality is already established in Corollary~\ref{corollary-gkdimT}.\\
(ii) For the first equality, a similar proof to that in (i) works, taking the generating subspace $V$ to be generated by the image of $\Pi$ in $J_\gamma$, and using Corollary~\ref{corollary-minimal-element} instead of Lemma~\ref{lemma-qdot-commutes}. For the second equality, see \cite[Remark 4.2(iii)]{lr-qsch}.\\
(iii) This is immediate, from (i) and (ii).
\end{proof}

The inclusion of $T$ in $\oqmnk$ induces a natural homomorphism 
$\theta: T\longrightarrow J_{\gamma}[\overline{\gamma}^{-1}]$ which sends any quantum minor $[I\mid J]$ 
to its image $\overline{[I\mid J]}$ in $J_{\gamma} \subseteq J_{\gamma}[\overline{\gamma}^{-1}]$. In 
particular, $\theta(\gamma)=\overline{\gamma}$, and so we may extend $\theta$ to a homomorphism 
(also denoted by $\theta$) from $T[\gamma^{-1}]$ to $J_{\gamma}[\overline{\gamma}^{-1}]$.

\begin{proposition}\label{proposition-isomorphism} 
The homomorphism
\[
\theta:T[\gamma^{-1}]\longrightarrow J_{\gamma}[\,\overline{\gamma}^{\,-1}]
\]
is an isomorphism. 
\end{proposition}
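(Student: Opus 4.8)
The plan is to establish that $\theta$ is an isomorphism by showing separately that it is surjective and injective, using the two tools flagged in the text: quantum Laplace expansions for surjectivity, and Gelfand--Kirillov dimension comparisons for injectivity.

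For surjectivity, I would argue that $J_{\gamma}[\overline{\gamma}^{-1}]$ is generated as an algebra by the images $\overline{x_{ij}}$ of the variables of $\oqmnk$ together with $\overline{\gamma}^{-1}$, so it suffices to show every $\overline{x_{ij}}$ lies in the image of $\theta$. The variables $x_{a_k b_l}$ with $a_k\in A$ and $b_l\in B$ are generators of $S\subseteq T$, so their images are immediately in $\mathrm{im}(\theta)$. For a variable $x_{ij}$ with $i\notin A$ or $j\notin B$, the idea is to use a quantum Laplace expansion of a suitable $(t+1)\times(t+1)$ quantum minor, or of a product of $t\times t$ quantum minors, that relates $\gamma \cdot x_{ij}$ (up to a scalar) to a sum of products of the minors in $\cm$ (the $m_{ij}$ and $n_{ij}$, which generate $T$ over $S$) together with variables from $S$. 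Concretely, one expects an identity in $\oqmnk$ of the shape $\gamma\, x_{ij} = \sum (\text{scalar})\, (\text{minor in } \cm \text{ or } S)\, (\text{variable in } S)$ modulo terms already handled, obtained by expanding along the row $i$ (resp. column $j$) the minor $[A\sqcup\{i\}\mid B\sqcup\{j\}]$ --- but since $[A\sqcup\{i\}\mid B\sqcup\{j\}]$ need not itself lie in $T$, one must be careful to arrange the expansion so that only the quantum minors in $\cm$ (those $\geq_{\st}\gamma$ differing in one entry) and elements of $S$ appear. After applying $p$ and inverting $\overline{\gamma}$, this expresses $\overline{x_{ij}}$ as an element of $\mathrm{im}(\theta)$. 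Iterating over the variables in an appropriate order (handling $x_{ij}$ with $i\notin A, j\in B$ via the $n$'s, those with $i\in A, j\notin B$ via the $m$'s, and the remaining ones using minors already shown to be hit), one obtains that $\theta$ is surjective.

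For injectivity, I would use that a surjective homomorphism between domains cannot decrease GK dimension by too much; more precisely, if $\theta$ were not injective, its kernel would be a nonzero (two-sided) ideal of $T[\gamma^{-1}]$, and since $T[\gamma^{-1}]$ is a Noetherian domain of finite GK dimension, factoring out a nonzero ideal strictly drops the GK dimension, giving $\gkdim(J_{\gamma}[\overline{\gamma}^{-1}]) = \gkdim\big(T[\gamma^{-1}]/\ker\theta\big) < \gkdim(T[\gamma^{-1}])$. But Lemma~\ref{lemma-gk-values}(iii) gives $\gkdim(T[\gamma^{-1}]) \leq \gkdim(J_{\gamma}[\overline{\gamma}^{-1}])$, a contradiction. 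Hence $\ker\theta = 0$ and $\theta$ is injective. (One should check the GK-dimension-drop statement applies: $T$ is an iterated Ore extension of $\k$ by the construction in Section~\ref{section-relations}, hence Noetherian, and so is its localisation $T[\gamma^{-1}]$; the relevant drop-in-GKdim fact for prime, or just domain, Noetherian algebras of finite GK dimension is standard, see \cite{kl}.)

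The main obstacle I anticipate is the surjectivity step: setting up the right quantum Laplace expansion so that the product $\gamma\, x_{ij}$ is expressed purely in terms of the generators of $T$ (elements of $S$ and minors in $\cm$) rather than in terms of larger or incomparable quantum minors that do not lie in $T$. This requires choosing the expansion (row versus column, and which extra index to adjoin) compatibly with whether $i\in A$ and $j\in B$, and then running an induction on the variables in a carefully chosen order so that every correction term has already been shown to lie in $\mathrm{im}(\theta)$. The GK-dimension bookkeeping for injectivity, by contrast, is essentially formal once Lemma~\ref{lemma-gk-values} is in hand.
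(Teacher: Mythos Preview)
Your approach matches the paper's: surjectivity via quantum Laplace expansions handled case by case, then injectivity via a Gelfand--Kirillov dimension drop. Two remarks.

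For surjectivity, the paper uses exactly the expansion you anticipate, with $I=A$ and $J=B\sqcup\{s\}$ (and the row/column analogue). The mechanism by which unwanted minors disappear is not that the expansion is arranged in $\oqmnk$ to avoid them, but that they vanish after passing to $J_\gamma$: the terms $[A\mid B\sqcup\{s\}\setminus\{b_j\}]$ with $s<b_j$ are not $\geq_{\st}\gamma$, and in the case $r\notin A$, $s\notin B$ the right-hand side $[A\sqcup\{r\}\mid B\sqcup\{s\}]$ is a $(t{+}1)\times(t{+}1)$ minor and hence also dies in $J_\gamma$. Your sketch gestures at this (``after applying $p$''), but the point deserves to be made explicit, since it is what makes the argument work.

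For injectivity, there is a circularity in your justification: you assert that $T[\gamma^{-1}]$ is Noetherian because ``$T$ is an iterated Ore extension of $\k$ by the construction in Section~\ref{section-relations}''. But Section~\ref{section-relations} only establishes commutation relations; the iterated Ore extension structure is Corollary~\ref{T-is-ioe}, which is proved \emph{after} Proposition~\ref{proposition-isomorphism} and depends on it (via the exact value of $\gkdim T$). Fortunately Noetherianity is unnecessary here: \cite[Proposition~3.15]{kl} only requires that the nonzero ideal $\ker\theta$ contain a regular element, which is automatic since $T[\gamma^{-1}]$ is a domain, being a localisation of a subalgebra of the domain $\oqmnk$. (The paper's own proof also writes ``noetherian domain'' at this step, but only ``domain'' is actually used.)
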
 

\begin{proof} 
The algebra $J_{\gamma}[\,\overline{\gamma}^{\,-1}]$ is generated over $\k$ by 
$\overline{\gamma}^{\,\pm 1}$ together with the image $\overline{x_{rs}}$ in $J_\gamma$ of the $mn$ generators 
$x_{rs}$ of $\oqmnk$. Thus, in order to prove surjectivity, it is enough to see that the $\overline{x_{rs}}$ are all in 
the image of $\theta$. 
This is obvious for the $\overline{x_{rs}}$ which have 
$r\in A$ and $s\in B$.

Let $x_{rs}$ be a generator with $(r,s)\notin A \times B$. 

Suppose first that $s\not\in B$. By \cite[A.5. Corollary (b)(i)]{gl-ijm},
for any $I\subseteq\{1,\dots,m\}$ and $J\subseteq\{1,\dots,n\}$ with $|J|=|I|+1$ we have 
\begin{eqnarray*}
 \sum_{j\in J} (-q)^{|[1,j)\cap J|} x_{rj} [I\mid J\setminus
\{j\}] 
&=&  \left\{
\begin{array}{l}
(-q)^{|[1,r)\cap I|} [I\sqcup \{r\} \mid J] \quad
(r\notin I)\\ 0 
\hspace{24ex}(r\in I) \end{array}
\right. .
\end{eqnarray*}

If we set $I=A$ and $J=B\sqcup\{s\}$  then we obtain the following relation in $\oqmnk$:
\begin{eqnarray*}
\minusqdot x_{rs}[A|B] + \sum_{j=1}^t\, \minusqdot x_{rb_j}[A\mid B\sqcup\{s\}\backslash\{b_j\}]\!\!\!
&=&\!\!\! \left\{
\begin{array}{l}
\minusqdot [A\sqcup\{r\}\mid B\sqcup\{s\}]~
(r\notin A)\\ 0 
\hspace{24ex}(r\in A) \end{array}
\right..
\end{eqnarray*}
To start with, suppose in addition that $r \in A$. Let us look at the image in $J_\gamma$ of the above 
relation. The terms $x_{rb_j}[A\mid B\sqcup\{s\}\backslash\{b_j\}]$ where $s < b_j$ are sent to zero because
they are not greater than or equal to $\gamma$. In addition, the image in $J_\gamma$ of the remaining such 
terms is in the image of $\theta$. It follows from this that, in this case, $\overline{x_{rs}}$ is in the 
image of $\theta$.  
 Of course, by a similar argument, exchanging rows and columns, we get that $\overline{x_{rs}}$ is in the
image of $\theta$ whenever $r \notin A$ and $s\in B$. 

It remains to deal with the case where $r\notin A$ and $s\notin B$. In that case, we have the relation
$\minusqdot x_{rs}[A|B] + \sum_{j=1}^t\, \minusqdot x_{rb_j}[A\mid B\sqcup\{s\}\backslash\{b_j\}] 
= \minusqdot [A\sqcup\{r\}\mid B\sqcup\{s\}]$ in $\oqmnk$. In this relation, the right hand term is not 
greater than or equal to $\gamma$ since it is a $(t+1) \times (t+1)$ minor. So, taking the image of this  latter relation
in $J_\gamma$, we get, by the same argument as above, that $\overline{x_{rs}}$ is in the image of $\theta$ 
since, as we have just proved, $\overline{x_{rb_j}}$ is in the image of $\theta$. 

This finishes the proof that $\theta$ is a surjective map. 

Hence, $\gkdim(T[\gamma^{-1}])\geq 
\gkdim(\theta(T[\gamma^{-1}])) = 
\gkdim(J_{\gamma}[\overline{\gamma}^{-1}])=\gkdim(J_{\gamma})$. 
Together with Lemma~\ref{lemma-gk-values}(iii) this gives $\gkdim(T[\gamma^{-1}])=
\gkdim(J_{\gamma}[\overline{\gamma}^{-1}])$.

Suppose now that $\theta$ is not injective. Then $\ker(\theta)$ is a nonzero 
ideal in the noetherian domain $T[\gamma^{\pm 1}]$.  Hence, 
$\gkdim(\theta(T[\gamma^{\pm 1}])) < \gkdim(T[\gamma^{\pm 1}])$, 
by \cite[Proposition 3.15]{kl}. However, this contradicts the fact that these two dimensions are equal, 
as observed in the previous paragraph. Thus, $\theta$ is injective and so $\theta$ is an isomorphism.
\end{proof} 

\begin{corollary}
$\gkdim(T)= (m+n+1)t -\sum_{i=1}^t\,(a_i+b_i)$.
\end{corollary}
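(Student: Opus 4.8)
The plan is straightforward: combine the upper bound on $\gkdim(T)$ from Corollary~\ref{corollary-gkdimT} with a lower bound coming from the isomorphism established in Proposition~\ref{proposition-isomorphism}. First I would recall that, by Lemma~\ref{lemma-gk-values}(i), localisation at the regular normal element $\gamma$ does not change Gelfand-Kirillov dimension, so $\gkdim(T) = \gkdim(T[\gamma^{-1}])$. Next, by Proposition~\ref{proposition-isomorphism}, $\theta : T[\gamma^{-1}] \longrightarrow J_{\gamma}[\overline{\gamma}^{-1}]$ is an isomorphism, hence $\gkdim(T[\gamma^{-1}]) = \gkdim(J_{\gamma}[\overline{\gamma}^{-1}])$. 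Finally, Lemma~\ref{lemma-gk-values}(ii) identifies this last dimension as $(m+n+1)t - \sum_{i=1}^t (a_i+b_i)$, which therefore equals $\gkdim(T)$.

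So the corollary is essentially a bookkeeping step: the inequality $\gkdim(T) \leq (m+n+1)t - \sum_{i=1}^t (a_i+b_i)$ of Corollary~\ref{corollary-gkdimT} is now matched by the reverse inequality, since $\gkdim(T) = \gkdim(J_{\gamma})$ and the latter has been computed exactly via \cite[Remark 4.2(iii)]{lr-qsch}. There is no real obstacle here — all the substantive work (the surjectivity argument using quantum Laplace expansions and the GK-dimension argument for injectivity) has already been carried out in the proof of Proposition~\ref{proposition-isomorphism}, and the GK-dimension comparisons are assembled in Lemma~\ref{lemma-gk-values}. I would simply write the chain of equalities and cite the relevant results. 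The only point worth stating explicitly is that the upper bound from Corollary~\ref{corollary-gkdimT} is what turns the chain of equalities into the exact value claimed, confirming that all the inequalities used along the way were in fact equalities.

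Concretely, the proof reads: by Lemma~\ref{lemma-gk-values}(i) and Proposition~\ref{proposition-isomorphism} we have
\[
\gkdim(T) = \gkdim(T[\gamma^{-1}]) = \gkdim\big(J_{\gamma}[\overline{\gamma}^{-1}]\big),
\]
and by Lemma~\ref{lemma-gk-values}(ii) the right-hand side equals $(m+n+1)t - \sum_{i=1}^t (a_i+b_i)$. This is consistent with, and sharpens, the inequality of Corollary~\ref{corollary-gkdimT}.
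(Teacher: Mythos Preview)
Your proof is correct and follows the same approach as the paper, which simply states that the result follows immediately from Lemma~\ref{lemma-gk-values}. You have made explicit the role of Proposition~\ref{proposition-isomorphism} in turning the inequality of Lemma~\ref{lemma-gk-values}(i) into an equality, which the paper leaves implicit since the proposition is the immediately preceding result.
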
 

\begin{proof}
This follows immediately from Lemma~\ref{lemma-gk-values}. 
\end{proof}

\begin{corollary} \label{T-is-ioe}
The algebra $T$ is an iterated Ore extension.
\end{corollary}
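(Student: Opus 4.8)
The plan is to build $T$ up from the base field $\k$ one generator at a time, following exactly the order laid out in Section~\ref{section-relations}: first the $t^2$ variables $x_{a_ib_j}$ generating $S$, then the $m_{ij}$ in lexicographic order, then the $n_{ij}$ in lexicographic order. For the first part of the construction, the variables $x_{a_ib_j}$ generate a copy of $\oq(M_{tt}(\k))$, which is a $t^2$-step iterated Ore extension of $\k$ by the standard fact recalled in Section~\ref{section-basic-definitions} (applied with $t$ in place of both $m$ and $n$); so $S$ is an iterated Ore extension of $\k$ of length $t^2$.

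Next I would treat each successive step $R(m_{kl}^-)\subseteq R(m_{kl})$ and $R(n_{kl}^-)\subseteq R(n_{kl})$ as an application of Lemma~\ref{ore-extension-condition-2}. At the stage where we adjoin $x=m_{kl}$, put $A=R(m_{kl}^-)$ and $B=R(m_{kl})$; take $V$ to be the span of all the generators ($x_{ij}$, and all $m_{ij}\le m_{kl}^-$) of $A$, which is finite dimensional, and let $\sigma$ be the restriction of $h_{m_{kl}}^{-1}$. Lemma~\ref{lemma-h-actions} shows that $\sigma(V)=V$ (each listed generator is sent to a scalar multiple of itself), and Proposition~3.7(i),(iii) shows exactly that $xa-\sigma(a)x\in A$ for $a$ in the generating set of $A$, hence, since $\sigma$ is an automorphism, for all $a\in A$. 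Thus the hypotheses of Lemma~\ref{ore-extension-condition-2} are met, and part (ii) of that lemma tells us that the natural surjection $R(m_{kl}^-)[y;\sigma,\delta]\to R(m_{kl})$ is an isomorphism provided $\gkdim(R(m_{kl}))=\gkdim(R(m_{kl}^-))+1$. The analogous argument applies at each $n_{kl}$ step, using Proposition~3.7(ii),(iv),(v) and the relevant parts of Lemma~\ref{lemma-h-actions}; note (iv) and (5) of Lemma~\ref{lemma-h-actions} mean the $m_{ij}$ are fixed by $h_{n_{kl}}^{-1}$, so $V$ at an $n$-stage (spanned by all $x_{ij}$, all $m_{ij}$, and the earlier $n_{ij}$) is still $\sigma$-stable.

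The one thing that has to be verified to make every step an Ore extension rather than a mere quotient of one is the Gelfand--Kirillov dimension bookkeeping. Here I would argue as follows: by the first part of Lemma~\ref{ore-extension-condition-2} every step satisfies $\gkdim(B)\le\gkdim(A)+1$, so after all $(m+n+1)t-\sum_{i=1}^t(a_i+b_i)$ steps we recover the bound of Corollary~\ref{corollary-gkdimT}. But by the Corollary immediately preceding the statement, $\gkdim(T)=(m+n+1)t-\sum_{i=1}^t(a_i+b_i)$, i.e.\ the final value equals the sum of the increments. Since no single step can increase $\gkdim$ by more than $1$ and the total increase is exactly the number of steps, every step must increase $\gkdim$ by exactly $1$; that is, $\gkdim(R(m_{kl}))=\gkdim(R(m_{kl}^-))+1$ and $\gkdim(R(n_{kl}))=\gkdim(R(n_{kl}^-))+1$ at each stage. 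Feeding this back into Lemma~\ref{ore-extension-condition-2}(ii) upgrades each surjection to an isomorphism, so $R(m_{kl})=R(m_{kl}^-)[m_{kl};\sigma_{kl},\delta_{kl}]$ and likewise for the $n$ steps.

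Assembling the chain $\k\subseteq S\subseteq R(m_{11})\subseteq\cdots\subseteq R(n_{\cdots})=T$, each link being an Ore extension (the first $t^2$ of them genuinely commutative/quantum-plane style inside $S$, the rest by the argument above), exhibits $T$ as an iterated Ore extension of $\k$, which is the claim. The main obstacle, and the reason the preceding corollary on $\gkdim(T)$ is indispensable, is precisely the GK-dimension equality at each stage: the commutation relations alone only give a surjection from the abstract skew polynomial ring onto each $R(\cdot)$, and one needs the global GK-dimension count — itself obtained via the isomorphism $\theta$ of Proposition~\ref{proposition-isomorphism} with the localised determinantal ring — to rule out any collapse. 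Everything else is the routine matching of Proposition~3.7 and Lemma~\ref{lemma-h-actions} against the hypotheses of Lemma~\ref{ore-extension-condition-2}.
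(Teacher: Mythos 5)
Your proposal is correct and follows essentially the same route as the paper: verify the hypotheses of Lemma~\ref{ore-extension-condition-2} at each adjunction step using the commutation relations and the torus elements $h_{m_{kl}}$, $h_{n_{kl}}$, then use the equality $\gkdim(T)=(m+n+1)t-\sum_{i=1}^t(a_i+b_i)$ (obtained via the isomorphism $\theta$) to force the Gelfand--Kirillov dimension to rise by exactly one at every stage, so that part~(ii) of that lemma upgrades each surjection to an isomorphism. The only cosmetic differences are that you treat the $t^2$ steps inside $S$ by quoting the standard iterated skew polynomial structure of quantum matrices instead of folding them into the same counting argument, and your reference to ``Proposition~3.7'' should point to the proposition of Section~\ref{section-torus-actions} listing relations (i)--(v).
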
 

\begin{proof}
 The algebra $T$ is constructed from $\k$ by adding in the $(m+n+1)t -\sum_{i=1}^t\,(a_i+b_i)$
 generators one-by-one. At each stage, the Gelfand-Kirillov dimension can increase by at most one, 
 by Lemma~\ref{ore-extension-condition-2}, and so must increase by exactly one, as 
 $\gkdim(T)= (m+n+1)t -\sum_{i=1}^t\,(a_i+b_i)$. Thus, each stage is an Ore extension, by
 Lemma~\ref{ore-extension-condition-2}. 
\end{proof}

\begin{remark} 
Corollary \ref{T-is-ioe} extends Lemma 6.4 of \cite{bv}, which asserts that in the commutative case
(that is when $q=1$), $\co_1(M_{mn}(K))_\gamma$ is a localisation of a polynomial ring in 
$(m+n+1)t -\sum_{i=1}^t\,(a_i+b_i)$ indeterminates.
\end{remark}

\begin{proposition} 
The QGASL $\oqmnkg$ is an integral domain. 
\end{proposition}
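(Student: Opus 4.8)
The plan is to push the integral domain property back across the isomorphism established in Proposition~\ref{proposition-isomorphism}. First, by Corollary~\ref{T-is-ioe} the algebra $T$ is an iterated Ore extension of $\k$; since any Ore extension $A[y;\sigma,\delta]$ of a noetherian domain $A$ is again a noetherian domain, it follows by induction that $T$ is a noetherian domain. Next, by Lemma~\ref{lemma-qdot-commutes} the element $\gamma$ is a regular normal element of $T$, so $\{\gamma^n\mid n\geq 0\}$ is an Ore set consisting of regular elements and the localisation $T[\gamma^{-1}]$ is again a domain (it embeds in the division ring of fractions of $T$).

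Now invoke Proposition~\ref{proposition-isomorphism}: the map $\theta$ identifies $T[\gamma^{-1}]$ with $J_\gamma[\overline{\gamma}^{-1}]$, so the latter algebra is a domain as well. On the other hand, $\overline{\gamma}$ is a regular normal element of $J_\gamma$ by Corollary~\ref{corollary-minimal-element}, so $\{\overline{\gamma}^n\mid n\geq 0\}$ is an Ore set of regular elements of $J_\gamma$ and the canonical localisation map $J_\gamma\longrightarrow J_\gamma[\overline{\gamma}^{-1}]$ is injective. Since $J_\gamma=\oqmnkg$ is nonzero (it is a QGASL, hence has a $\k$-basis of standard monomials), it is a nonzero subring of the domain $J_\gamma[\overline{\gamma}^{-1}]$, and therefore $\oqmnkg$ is an integral domain.

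There is no real obstacle remaining, as the substantive work — showing that $T$ is an iterated Ore extension (Corollary~\ref{T-is-ioe}) and that $\theta$ is an isomorphism (Proposition~\ref{proposition-isomorphism}) — has already been carried out. The only point that needs care is the use of the regularity and normality of $\overline{\gamma}$ to ensure that $J_\gamma$ genuinely embeds in its localisation, so that the domain property descends from $J_\gamma[\overline{\gamma}^{-1}]$ to $J_\gamma$ itself.
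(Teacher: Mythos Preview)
Your proof is correct and follows essentially the same route as the paper: show that $T[\gamma^{-1}]$ is a domain, transport this across the isomorphism $\theta$ of Proposition~\ref{proposition-isomorphism} to $J_\gamma[\overline{\gamma}^{-1}]$, and then use the regularity of $\overline{\gamma}$ (Corollary~\ref{corollary-minimal-element}) to embed $J_\gamma$ in its localisation. The only minor difference is that the paper obtains the domain property of $T$ directly from its being a subalgebra of the domain $\oqmnk$, rather than by invoking Corollary~\ref{T-is-ioe}.
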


\begin{proof} The algebra $T[\gamma^{-1}]$ is an integral domain as it is a localisation of 
$T$ which is a subalgebra of the domain $\oqmnk$. As a consequence, the isomorphism 
$
T[\gamma^{-1}]\cong \oqmnkg[\,\overline{\gamma}^{\,-1}]
$  of Proposition~\ref{proposition-isomorphism} shows that $\oqmnkg[\,\overline{\gamma}^{\,-1}]$ is an integral domain. As $\overline{\gamma}$ is a regular normal element of $\oqmnkg$ by 
Corollary~\ref{corollary-minimal-element}, the natural map $\oqmnkg\longrightarrow\oqmnkg[\overline{\gamma}^{\,-1}]$ is a monomorphism, and so $\oqmnkg$ is also an integral domain. 
\end{proof}

\begin{remark} 
The previous result applies to all generalised quantum determinantal rings $\oqmnk_\tau$, 
for any $\tau\in\Pi$. In particular, it applies to the {\em upper neighbours} of $\gamma$ which are the elements 
$\tau\in\Pi\backslash\Pi_{\gamma}$ with the property that if $\sigma\in\Pi$ with 
$\gamma<_\st\sigma\leq_\st\tau$ then $\sigma=\tau$. This makes available 
\cite[Proposition 2.2.2]{lr-qsch} which we use in the proof of our main theorem below. 
\end{remark} 

\begin{theorem}
The generalised quantum determinantal ring $\oqmnkg$ is a maximal order.
\end{theorem}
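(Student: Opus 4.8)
The plan is to work in two stages: first establish that the localisation $\oqmnkg[\,\overline{\gamma}^{\,-1}]$ is a maximal order, then descend to $\oqmnkg$ itself. For the first stage, recall from Proposition~\ref{proposition-isomorphism} that $\oqmnkg[\,\overline{\gamma}^{\,-1}]\cong T[\gamma^{-1}]$, and from Corollary~\ref{T-is-ioe} that $T$ is an iterated Ore extension of $\k$. An iterated Ore extension of a field by automorphisms and skew derivations is an Auslander-regular, Cohen-Macaulay noetherian domain; in particular each skew-polynomial step $A[y;\sigma,\delta]$ preserves the property of being a maximal order when $A$ is a noetherian maximal order (this is standard — see the behaviour of maximal orders under Ore extensions, e.g. in the style of the arguments used in \cite{lr-max-det} and \cite{lr-qasl}). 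Hence $T$ is a maximal order, and maximal orders are stable under central — indeed under Ore — localisation, so $T[\gamma^{-1}]$, and therefore $\oqmnkg[\,\overline{\gamma}^{\,-1}]$, is a maximal order.

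For the second stage I would use the standard criterion that allows one to pass from a localisation at a regular normal element back to the original ring. Concretely: $\overline{\gamma}$ is a regular normal element of $\oqmnkg$ by Corollary~\ref{corollary-minimal-element}, the ring $\oqmnkg$ is a noetherian domain (by the Proposition immediately preceding the theorem, or by \cite[Proposition 4.3]{lr-qsch}), and the powers $\{\overline{\gamma}^{\,n}\}$ form an Ore set with $\bigcap_n \overline{\gamma}^{\,n}\oqmnkg = 0$. Under these hypotheses there is a lifting result — the one invoked via the remark citing \cite[Proposition 2.2.2]{lr-qsch} — which says that if $\oqmnkg[\,\overline{\gamma}^{\,-1}]$ is a maximal order and the factor $\oqmnkg/\overline{\gamma}\,\oqmnkg$ (or the factors by the relevant prime(s) lying over $\overline{\gamma}$) is again a maximal order, or more precisely if every height-one prime condition is controlled, then $\oqmnkg$ is a maximal order. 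This is where the \emph{upper neighbours} of $\gamma$ enter: $\oqmnkg/\overline{\gamma}\,\oqmnkg$ should be expressible in terms of the generalised quantum determinantal rings $\oqmnk_\tau$ for $\tau$ an upper neighbour of $\gamma$, each of which is a domain by the preceding Remark, so one can run an induction on the poset $\Pi$ (on the rank/size of $\gamma$, or on the length of a maximal chain above $\gamma$), the base case being $\gamma$ of maximal size where $\Pi\backslash\Pi_\gamma=\{\gamma\}$ and the ring is just $S$, a quantum affine space, which is visibly a maximal order.

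The technical engine for the descent step is a Nagata-type / reflexive-ideal argument: to check that $\oqmnkg$ is a maximal order one takes a reflexive fractional ideal (equivalently, intersects over height-one primes), shows that any such ideal either contains a power of $\overline{\gamma}$ — in which case the maximal-order property of the localisation applies — or is captured by the factor rings $\oqmnk_\tau$ for the upper neighbours $\tau$, where the inductive hypothesis applies. Assembling these, together with the general principle that $R$ is a maximal order whenever $R[\overline{\gamma}^{-1}]$ is and the relevant factors are, yields the result.

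The main obstacle I anticipate is the descent from the localisation back to $\oqmnkg$: verifying the hypotheses of the lifting criterion (\cite[Proposition 2.2.2]{lr-qsch}), in particular identifying $\oqmnkg/\overline{\gamma}\,\oqmnkg$ correctly with a ring built from the upper-neighbour rings $\oqmnk_\tau$ and checking these are sufficiently well-behaved (domains, with their own normal elements to continue the induction). The maximal-order-under-Ore-extension part of stage one is essentially bookkeeping once Corollary~\ref{T-is-ioe} is in hand; the subtlety is entirely in controlling the height-one primes containing $\overline{\gamma}$.
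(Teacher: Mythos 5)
Your proposal follows essentially the same route as the paper: show $T[\gamma^{-1}]$ is a maximal order as a localisation of an iterated Ore extension, transfer this to $\oqmnkg[\,\overline{\gamma}^{\,-1}]$ via Proposition~\ref{proposition-isomorphism}, and then descend to $\oqmnkg$ by the QGASL lifting criterion \cite[Proposition 2.2.2]{lr-qsch}, whose applicability rests on $\gamma$ being the unique minimal element and the upper-neighbour factors being domains. The extra machinery you sketch for the descent (induction on the poset, reflexive-ideal analysis) is not needed, since that work is already packaged inside the cited proposition, but the argument as a whole matches the paper's proof.
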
 

\begin{proof}
The algebra $T[\gamma^{\pm 1}])$ is a localisation of an iterated Ore extension, and so is 
a maximal order, by \cite[V. Proposition 2.5, IV. Proposition 2.1]{mr}. Thus, 
$\oqmnkg[\overline{\gamma}^{-1}]$ is a maximal order, by the isomorphism established in 
Proposition~\ref{proposition-isomorphism}. 
Hence, \cite[Proposition 2.2.2]{lr-qsch} applies to the quantum graded algebra with a straightening 
law $\oqmnkg$ (whose underlying poset has the single minimal element $\gamma$); so we conclude 
that $\oqmnkg$ is a maximal order. 
\end{proof} 

\begin{remark} 
As pointed out in the introduction, our motivation in the present work is to complete the study, from the point of view of noncommutative algebraic geometry, of generalised quantum determinantal rings. 
 Here is a summary of the results.
 
Let $\gamma\in\Pi$. Then, $\oqmnk_\gamma$ is an integral domain and a maximal order in its division ring 
of fractions, as established in \cite{lr-qsch} and the present work.

Further, $\oqmnk_\gamma$ is AS-Cohen-Macaulay, and it is AS-Gorenstein for any nonzero $q$ in $K$ if and 
only if it is AS-Gorenstein for $q=1$.
All this can be shown following the arguments developed in paragraph 4 of \cite{lr-qasl}
(see in particular Theorems 4.2 and 4.3). Notice in addition that necessary and sufficient conditions for   
$\co_1(M_{mn}(K))$ to be AS-Gorenstein are given in \cite[Theorem 8.14]{bv}.

\end{remark} 






\vskip 1cm

\begin{minipage}{40ex}
{\noindent T. H. Lenagan: \\
Maxwell Institute,\\
School of Mathematics,\\ University of Edinburgh,\\
James Clerk Maxwell Building,\\
The King's Buildings,\\
Peter Guthrie Tait Road,\\
Edinburgh EH9 3FD \\
           UK\\[0.5ex]
email: tom@maths.ed.ac.uk\\}
\end{minipage}
~~~~~
\begin{minipage}{40ex}{\noindent L. Rigal: \\
Universit\'e Sorbonne Paris Nord,\\
LAGA, CNRS, UMR 7539,\\
F-93430, Villetaneuse,\\
 France\\[0.5ex]
email: rigal@math.univ-paris13.fr\\~\\~\\~\\~\\}
\end{minipage}


\end{document}